\def\@map#1#2[#3]{\mbox{$#1 \colon\thinspace #2 \longrightarrow #3$}}
\def\map#1#2{\@ifnextchar [{\@map{#1}{#2}}{\@map{#1}{#2}[#2]}}
\renewcommand{\epsilon}{\ensuremath{\varepsilon}}
\renewcommand{\phi}{\ensuremath{\varphi}}
\renewcommand{\to}{\ensuremath{\longrightarrow}}
\renewcommand{\mapsto}{\ensuremath{\longmapsto}}
\newcommand{\R}{\ensuremath{\mathbb R}}
\newcommand{\N}{\ensuremath{\mathbb N}}
\newcommand{\Z}{\ensuremath{\mathbb{Z}}}
\newcommand{\St}[1][2]{\ensuremath{\mathbb S}^{#1}}
\newcommand{\rp}{\ensuremath{\mathbb{R}P^2}}
\newcommand{\brak}[1]{\ensuremath{\left\{ #1 \right\}}}
\newcommand{\ang}[1]{\ensuremath{\left\langle #1\right\rangle}}
\newtheoremstyle{theoremm}{}{}{\itshape}{}{\scshape}{.}{ }{}
\theoremstyle{theoremm}
\newtheorem{thm}{Theorem}
\newtheorem{lem}[thm]{Lemma}
\newtheorem{prop}[thm]{Proposition}
\newtheorem{cor}[thm]{Corollary}
\newtheoremstyle{remark}{}{}{}{}{\scshape}{.}{ }{}
\theoremstyle{remark}
\newtheorem{rem}[thm]{Remark}
\renewcommand{\ker}[1]{\ensuremath{\operatorname{\text{Ker}}\left({#1}\right)}}
\newcommand{\kernb}{\ensuremath{\operatorname{\text{Ker}}}}
\newcommand{\reth}[1]{Theorem~\protect\ref{th:#1}}
\newcommand{\relem}[1]{Lemma~\protect\ref{lem:#1}}
\newcommand{\repr}[1]{Proposition~\protect\ref{prop:#1}}
\newcommand{\reco}[1]{Corollary~\protect\ref{cor:#1}}
\newcommand{\resec}[1]{Section~\protect\ref{sec:#1}}
\newcommand{\req}[1]{equation~(\protect\ref{eq:#1})}
\newcommand{\reqref}[1]{(\protect\ref{eq:#1})}
\numberwithin{equation}{section}
\begin{document}

\title{Crystallographic groups and flat manifolds from surface braid groups} 
\author[D.~L.~Gon\c{c}alves]{Daciberg Lima Gon\c{c}alves}
\address{Departamento de Matem\'atica - IME-USP, Rua~do~Mat\~ao~1010,
CEP:~05508-090 - S\~ao Paulo - SP - Brazil}
\email{dlgoncal@ime.usp.br}

\author[J.~Guaschi]{John Guaschi}
\address{Normandie Univ., UNICAEN, CNRS, Laboratoire de Math\'ematiques Nicolas Oresme UMR CNRS 6139, 14000 Caen, France}
\email{john.guaschi@unicaen.fr}

\author[O.~Ocampo]{Oscar Ocampo}
\address{Universidade Federal da Bahia, Departamento de Matem\'atica - IME,
Av.~Adhemar de Barros~S/N, CEP:~40170-110 - Salvador - BA - Brazil}
\email{oscaro@ufba.br}

\author[C.~M.~Pereiro]{Carolina de Miranda e Pereiro}
\address{Universidade Federal do Esp\'{i}rito Santo, UFES, Departamento de Matem\'{a}tica, 29075-910, Vit\'{o}ria, Esp\'{i}rito Santo, Brazil}
\email{carolinapereiro@gmail.com}

\subjclass[2010]{Primary: 20F36, 20H15; Secondary: 57N16.}
\date{27th May 2020}

\keywords{Surface braid groups, crystallographic group, flat manifold, Anosov diffeomorphism, K\"ahler manifold}

\begin{abstract}
\noindent
Let $M$ be a compact surface without boundary,
 and $n\geq 2$. We analyse the quotient group $B_n(M)/\Gamma_2(P_n(M))$ of the surface braid group $B_{n}(M)$ by the commutator subgroup $\Gamma_2(P_n(M))$ of the pure braid group $P_{n}(M)$. If $M$ is different from the $2$-sphere $\St$, we prove that $B_n(M)/\Gamma_2(P_n(M)) \cong P_n(M)/\Gamma_2(P_n(M)) \rtimes_{\varphi} S_n$, and that $B_n(M)/\Gamma_2(P_n(M))$ is a crystallographic group if and only if $M$ is orientable.

If $M$ is orientable, we prove a number of results regarding the structure of $B_n(M)/\Gamma_2(P_n(M))$. We characterise the finite-order elements of this group, and we determine the conjugacy classes of these elements. We also show that there is a single conjugacy class of finite subgroups of $B_n(M)/\Gamma_2(P_n(M))$ isomorphic either to $S_n$ or to certain Frobenius groups. We prove that crystallographic groups whose image by the projection  $B_n(M)/\Gamma_2(P_n(M))\to S_n$  is  a Frobenius group are not Bieberbach groups. Finally, we construct a family of Bieberbach subgroups $\widetilde{G}_{n,g}$ of $B_n(M)/\Gamma_2(P_n(M))$ of dimension $2ng$ and whose holonomy group is the finite cyclic group of order $n$, and if ${\mathcal X}_{n,g}$ is a flat manifold whose  fundamental group is $\widetilde{G}_{n,g}$, we prove that it is an orientable K\"ahler manifold that admits Anosov diffeomorphisms.  
\end{abstract}

\maketitle

\section{Introduction}

The braid groups of the $2$-disc, or Artin braid groups, were introduced by Artin in 1925 and further studied in 1947~\cite{A1,A2}. Surface braid groups were initially studied by Zariski~\cite{Z}, and were later generalised by Fox and Neuwirth to braid groups of arbitrary topological spaces using configuration spaces as follows~\cite{FoN}. Let $M$ be a compact, connected surface, and let $n\in \mathbb N$. The \textit{$n$th ordered configuration space of $M$}, denoted by $F_{n}(M)$, is defined by:
\begin{equation*}
F_n(M)=\left\{(x_{1},\ldots,x_{n})\in M^{n}\,|\,x_{i}\neq x_{j}\,\, \text{if}\,\, i\neq j,\,i,j=1,\ldots,n\right\}.
\end{equation*}
The \textit{$n$-string pure braid group $P_n(M)$ of $M$} is defined by $P_n(M)=\pi_1(F_n(M))$. The symmetric group $S_{n}$ on $n$ letters acts freely on $F_{n}(M)$ by permuting coordinates, and the \textit{$n$-string braid group $B_n(M)$ of $M$} is defined by $B_n(M)=\pi_1(F_n(M)/S_{n})$. This gives rise to the following short exact sequence:
\begin{equation}\label{eq:ses}
1 \to P_{n}(M) \to B_{n}(M) \stackrel{\sigma}{\longrightarrow}S_{n} \to 1.
\end{equation}
The map $\map{\sigma}{B_{n}(M)}[S_{n}]$ is the standard homomorphism that associates a permutation to each element of $S_{n}$.
In~\cite{GGO,GGO2,GGO3}, three of the authors of this paper studied the quotient $B_n/\Gamma_{2}(P_n)$, where $B_{n}$ is the $n$-string Artin braid group, $P_{n}$ is the subgroup of $B_{n}$ of pure braids, and $\Gamma_{2}(P_n)$ is the commutator subgroup of $P_n$. In~\cite{GGO}, it was proved that this quotient is a crystallographic group. Crystallographic groups play an important r\^{o}le in the study of the groups of isometries of Euclidean spaces (see Section~\ref{sec:2} for precise definitions, as well as~\cite{Charlap,Dekimpe,Wolf} for more details). Using different techniques, Marin extended the results of~\cite{GGO} to generalised braid groups associated to arbitrary complex reflection groups~\cite{Ma}. Beck and Marin showed that other finite non-Abelian groups, not covered by~\cite{GGO2,Ma}, embed in  $B_n/\Gamma_{2}(P_n)$~\cite{BM}.

In this paper, we study the quotient $B_n(M)/\Gamma_2(P_n(M))$ of $B_n(M)$, where $\Gamma_2(P_n(M))$ is the commutator subgroup of $P_n(M)$, one of our aims being to decide whether it is crystallographic or not. The group extension~\reqref{ses} gives rise to the following short exact sequence:
\begin{equation}\label{eq:sestorus1}
1\to P_n(M)/\Gamma_2(P_n(M))\to B_n(M)/\Gamma_2(P_n(M))\stackrel{\overline{\sigma}}{\longrightarrow} S_n\to 1.
\end{equation}
Note that if $M$ is an orientable, compact surface of genus $g\geq 1$ without boundary and $n=1$ then $B_1(M)/[P_1(M), P_1(M)]$ is the Abelianisation of $\pi_{1}(M)$, and is isomorphic to $\mathbb{Z}^{2g}$, so it is clearly a crystallographic group.

In Section~\ref{sec:2}, we recall some definitions and facts about crystallographic groups, and if $M$ is an orientable, compact, connected surface of genus $g\geq 1$ without boundary, we prove that $B_n(M)/\Gamma_2(P_n(M))$ is crystallographic.

\begin{prop}\label{prop:toruscryst} 
Let $M$ be an orientable, compact, connected surface of genus $g\geq 1$ without boundary, and let $n\geq 2$. Then there exists a split extension of the form:
\begin{equation}\label{eq:sestorus}
1\to \mathbb{Z}^{2ng}\to B_n(M)/\Gamma_2(P_n(M)) \stackrel{\overline{\sigma}}{\longrightarrow} S_n\to 1,
\end{equation}
where the holonomy representation $\map{\varphi}{S_{n}}[\operatorname{\text{Aut}}(\mathbb{Z}^{2ng})]$ is faithful and where the action is defined by~\reqref{action}. In particular, the quotient $B_n(M)/\Gamma_2(P_n(M))$ is a crystallographic group of dimension $2ng$ and whose holonomy group is $S_n$.
\end{prop}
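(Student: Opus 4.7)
The plan is to verify three things: (i) the kernel $P_n(M)/\Gamma_2(P_n(M))$ in \reqref{sestorus1} is free Abelian of rank $2ng$; (ii) this extension splits; and (iii) the resulting $S_n$-action on the kernel is faithful. Once (i)--(iii) are established, $B_n(M)/\Gamma_2(P_n(M))$ is an extension of the finite group $S_n$ by a free Abelian group of rank $2ng$ on which $S_n$ acts faithfully, which forces the kernel to be a maximal Abelian subgroup, and hence yields a crystallographic group of dimension $2ng$ with holonomy $S_n$.

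For (i), I would work from a standard presentation of $P_n(M)$ (for instance that of Bellingeri) with the ``surface-type'' generators $a_{i,r},b_{i,r}$ ($1\leq i\leq n$, $1\leq r\leq g$) and the ``cord'' generators $A_{i,j}$ ($1\leq i<j\leq n$), and then pass to the Abelianisation. The PR-type pure braid relations become trivial after Abelianising, and the surface relations---which are available precisely because $g\geq 1$---together with the PR relations are enough to force each $A_{i,j}$ into $\Gamma_2(P_n(M))$. Consequently the $2ng$ classes of the $a_{i,r},b_{i,r}$ form a basis, and $P_n(M)/\Gamma_2(P_n(M))\cong\Z^{2ng}$.

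For (ii), I would lift the adjacent transposition $(i,i+1)\in S_n$ to the class $\overline{\sigma_i}\in B_n(M)/\Gamma_2(P_n(M))$ of the standard braid generator $\sigma_i$. The braid and commutation relations among the $\overline{\sigma_i}$ are inherited from $B_n(M)$, so the only defining relation of $S_n$ left to check is $\overline{\sigma_i}^{2}=1$. But $\sigma_i^{2}=A_{i,i+1}$ in $B_n(M)$, and by (i) the class of $A_{i,i+1}$ is trivial in $B_n(M)/\Gamma_2(P_n(M))$; hence the assignment $(i,i+1)\mapsto\overline{\sigma_i}$ extends to a well-defined homomorphism $S_n\to B_n(M)/\Gamma_2(P_n(M))$ that is a section of $\overline{\sigma}$.

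For (iii), the action $\varphi$ coming from this section is read off from the standard conjugation formulas for $\sigma_i\cdot a_{j,r}\cdot\sigma_i^{-1}$ and $\sigma_i\cdot b_{j,r}\cdot\sigma_i^{-1}$ in $B_n(M)$, reduced modulo $\Gamma_2(P_n(M))$; upon reduction the correction terms (which lie in the subgroup generated by the $A_{k,l}$) vanish, and one is left with a plain permutation of the index $j$ by $(i,i+1)$. Thus $\varphi$ realises $S_n$ as the block permutation representation on $\Z^{2ng}=(\Z^{2g})^n$, which is visibly faithful for $n\geq 2$. The hard part will be (i)---showing rigorously that every cord generator $A_{i,j}$ lies in $\Gamma_2(P_n(M))$, which requires a careful manipulation of the surface relations of $P_n(M)$ for $g\geq 1$; once this is in hand, (ii) and (iii) follow by direct computation on the given generators.
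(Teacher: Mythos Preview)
Your proposal is correct and follows essentially the same route as the paper: the paper establishes (i) as \reco{abel} (via \repr{subs}(\ref{subs:5}), which is precisely the computation you flag as the hard part, showing the cord generators $T_{i,j}$ lie in $\Gamma_{2}(P_{n}(M))$), then obtains (ii) and (iii) exactly as you describe, and concludes with the crystallographic criterion of \relem{cryst}. The only cosmetic difference is that the paper uses the Gonz\'alez-Meneses presentation (with generators $a_{i,r}$, $1\le r\le 2g$, and $T_{i,j}$) rather than Bellingeri's $a_{i,r},b_{i,r},A_{i,j}$.
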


As for $B_{n}/[P_{n},P_{n}]$, some natural questions arise for $B_n(M)/\Gamma_2(P_n(M))$, such as the existence of torsion, the realisation of elements of finite order and that of finite subgroups, their conjugacy classes, as well as properties of some Bieberbach subgroups of $B_n(M)/\Gamma_2(P_n(M))$. In \reth{ordk0}, we characterise the finite-order elements of $B_{n}(M)/\Gamma_2(P_n(M))$ and their conjugacy classes, from which we see that the conjugacy classes of finite-order elements of $B_{n}(M)/\Gamma_2(P_n(M))$ are in one-to-one correspondence with the conjugacy classes of elements of the symmetric group $S_n$. 

\begin{thm}\label{th:ordk0}
Let $n\geq 2$, and let $M$ be an orientable surface of genus $g\geq 1$ without boundary. 
\begin{enumerate}[(a)]
\item\label{it:ordk0a} Let $e_1$ and $e_2$ be finite-order elements of $B_{n}(M)/\Gamma_2(P_n(M))$. Then $e_1$ and $e_2$ are conjugate if and only if their permutations $\overline{\sigma}(e_1)$ and $\overline{\sigma}(e_2)$ have the same cycle type. Thus two finite cyclic subgroups $H_1$ and $H_2$ of $B_{n}(M)/\Gamma_2(P_n(M))$ are conjugate if and only if the generators of $\overline{\sigma}(H_1)$ and $\overline{\sigma}(H_2)$ have the same cycle type.
\item\label{it:ordk0b} If $H_1$ and $H_2$ are subgroups of $B_{n}(M)/\Gamma_2(P_n(M))$ that are isomorphic to $S_n$ then they are conjugate.
\end{enumerate}
\end{thm}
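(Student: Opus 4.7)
The plan is to exploit the split crystallographic structure of \repr{toruscryst}, writing $G = B_n(M)/\Gamma_2(P_n(M)) = \mathbb{Z}^{2ng} \rtimes_\varphi S_n$ and denoting elements by pairs $(v, \pi)$ with $v \in \mathbb{Z}^{2ng}$ and $\pi \in S_n$. Direct computation in the semi-direct product gives two basic formulas: conjugation by $(w, 1)$ sends $(v, \pi)$ to $(v + (I - \varphi(\pi))(w), \pi)$, whereas conjugation by the section image $(0, \tau)$ sends $(v, \pi)$ to $(\varphi(\tau)(v), \tau\pi\tau^{-1})$. Moreover, since $\mathbb{Z}^{2ng}$ is torsion-free, $(v, \pi)$ has finite order if and only if $\pi$ has some order $m$ and the norm $N_\pi(v) := \sum_{i=0}^{m-1} \varphi(\pi)^i(v)$ vanishes, in which case $(v, \pi)$ itself has order exactly $m$.

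For part~(\ref{it:ordk0a}), the forward implication is immediate since $\overline{\sigma}$ is a homomorphism. For the converse, given finite-order elements $e_1 = (v_1, \pi_1)$ and $e_2 = (v_2, \pi_2)$ whose permutations have the same cycle type, I first conjugate $e_1$ by $(0, \tau)$ for some $\tau \in S_n$ with $\tau \pi_1 \tau^{-1} = \pi_2$, reducing to the case $\pi_1 = \pi_2 = \pi$. It then suffices to solve $(I - \varphi(\pi))(w) = v_2 - v_1$ for some $w \in \mathbb{Z}^{2ng}$. The finite-order hypothesis places $v_2 - v_1$ in $\ker{N_\pi}$, so what is required is the equality $\ker{N_\pi} = \operatorname{Im}(I - \varphi(\pi))$, i.e.\ the vanishing of the Tate cohomology $\hat{H}^{-1}(\langle\pi\rangle, \mathbb{Z}^{2ng})$. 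Using the explicit action $\varphi$, the $S_n$-module $\mathbb{Z}^{2ng}$ will be identified with the permutation module $(\mathbb{Z}^{2g})^n \cong \operatorname{Ind}_{S_{n-1}}^{S_n}\mathbb{Z}^{2g}$ (with trivial $S_{n-1}$-action). Restricted to $\langle\pi\rangle$ and decomposed according to the cycles of $\pi$, Shapiro's lemma reduces the computation to $\hat{H}^{-1}(C, \mathbb{Z}^{2g})$ for finite cyclic groups $C$ acting trivially on the torsion-free module $\mathbb{Z}^{2g}$, all of which vanish. The statement about cyclic subgroups is then a formal consequence: two finite cyclic subgroups are conjugate if and only if they admit conjugate generators, which by the element statement amounts to the existence of generators with the same cycle type in $S_n$.

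For part~(\ref{it:ordk0b}), let $H$ be a subgroup of $G$ isomorphic to $S_n$. Since $\mathbb{Z}^{2ng}$ is torsion-free, $H \cap \mathbb{Z}^{2ng} = \{1\}$, so $\overline{\sigma}|_H$ is an injective homomorphism between groups of equal order; hence $H$ is a complement of $\mathbb{Z}^{2ng}$ in $G$. A standard cohomological argument shows that the $\mathbb{Z}^{2ng}$-conjugacy classes of such complements form a torsor under $H^1(S_n, \mathbb{Z}^{2ng})$. Invoking the same identification as above together with Shapiro's lemma gives $H^1(S_n, \mathbb{Z}^{2ng}) \cong H^1(S_{n-1}, \mathbb{Z}^{2g}) \cong \operatorname{Hom}(S_{n-1}^{\mathrm{ab}}, \mathbb{Z}^{2g})$, which vanishes since $S_{n-1}^{\mathrm{ab}}$ is finite and $\mathbb{Z}^{2g}$ is torsion-free. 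Therefore any two subgroups of $G$ isomorphic to $S_n$ are conjugate.

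The chief obstacle is confirming the explicit description of $\mathbb{Z}^{2ng}$ as the $S_n$-permutation module $(\mathbb{Z}^{2g})^n$ on which the symmetric group permutes the factors, i.e.\ extracting from the abelianisation of $P_n(M)$ a decomposition into $n$ copies of $H_1(M) \cong \mathbb{Z}^{2g}$ with the correct $S_n$-action. Once this structural input is secured, the two conjugacy statements reduce to routine applications of Shapiro's lemma combined with the linear-algebraic description of conjugation in the semi-direct product.
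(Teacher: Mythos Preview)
Your proposal is correct and takes a genuinely different route from the paper. The paper argues both parts by explicit construction: for~(\ref{it:ordk0a}) it writes $\theta=\omega\prod a_{i,r}^{s_{i,r}}$ and builds a conjugating element $\alpha=\prod a_{i,r}^{p_{i,r}}$ by setting the $p_{i,r}$ equal to partial sums of the $s_{i,r}$ along each cycle of $\overline{\sigma}(\theta)$ (invoking \relem{coeftjr} to see that these sums telescope correctly); for~(\ref{it:ordk0b}) it reduces to one factor $\mathbb{Z}^{n}\rtimes S_n$, computes that $s(\tau_i)=(0,\ldots,0,a_i,-a_i,0,\ldots,0)\tau_i$ for some integers $a_i$, and then writes down an explicit $(x_1,\ldots,x_n)$ conjugating $s(H)$ onto the standard $S_n$. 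Your approach replaces these hands-on constructions by the cohomological statements $\hat{H}^{-1}(\langle\pi\rangle,\mathbb{Z}^{2ng})=0$ and $H^{1}(S_n,\mathbb{Z}^{2ng})=0$, both obtained via Shapiro's lemma from the permutation-module structure. The ``chief obstacle'' you flag is in fact already dealt with in the paper: \repr{action} and~\reqref{action} show that $\varphi(\tau)(a_{j,r})=a_{\tau(j),r}$, which is exactly the identification $\mathbb{Z}^{2ng}\cong(\mathbb{Z}^{2g})^{n}\cong\operatorname{Ind}_{S_{n-1}}^{S_n}\mathbb{Z}^{2g}$ you need. What you gain is a clean conceptual argument that immediately generalises (any permutation $S_n$-lattice would do); what the paper gains is a self-contained elementary proof requiring no group cohomology, and the explicit conjugators it produces are reused later, for instance in the analysis of the Frobenius case in \repr{embedd}.
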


The results of \reth{ordk0} lead to the following question: if $H_{1}$ and $H_{2}$ are finite subgroups of $B_{n}(M)/\Gamma_2(P_n(M))$ such that $\overline{\sigma}(H_1)$ and $\overline{\sigma}(H_2)$ are conjugate in $S_{n}$, then are $H_{1}$ and $H_{2}$ conjugate? For each odd prime $p$, we shall consider the corresponding Frobenius group, which is the semi-direct product $\mathbb{Z}_p\rtimes \mathbb{Z}_{(p-1)/2}$, the action being given by an automorphism of $\mathbb{Z}_p$ of order $(p-1)/2$. In \repr{embedd} we show that the conclusion of \reth{ordk0} holds for subgroups of $B_5(M)/\Gamma_{2}(P_5(M))$ that are isomorphic to the Frobenius group $\mathbb{Z}_5\rtimes \mathbb{Z}_2$. 

In \resec{3}, we study some Bieberbach subgroups of $B_{n}(M)/\Gamma_2(P_n(M))$ whose construction is suggested by that of the Bieberbach subgroups of $B_{n}/\Gamma_2(P_n)$ given in~\cite{Oc}. 

\begin{thm}\label{th:bieberbach}
Let $n\geq 2$, and let $M$ be an orientable surface of genus $g\geq 1$ without boundary. Let $G_{n}$ be the cyclic subgroup $\ang{(n,n-1,\ldots,2,1)}$ of $S_n$. Then 
there exists a  subgroup $\widetilde{G}_{n,g}$ of $\sigma^{-1}(G_n)/\Gamma_2(P_n(M))\subset B_{n}(M)/\Gamma_2(P_n(M))$ that is a Bieberbach group of dimension $2ng$ whose holonomy group is $G_{n}$. Further, the centre $Z(\widetilde{G}_{n,g})$ of $\widetilde{G}_{n,g}$ is a free Abelian group of rank $2g$.
\end{thm}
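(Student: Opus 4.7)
The plan is to adapt to the surface setting the construction of Bieberbach subgroups of $B_n/\Gamma_2(P_n)$ given in~\cite{Oc}. By Proposition~\ref{prop:toruscryst}, the translation lattice $P_n(M)/\Gamma_2(P_n(M))\cong \mathbb{Z}^{2ng}$ decomposes naturally as $(\mathbb{Z}^{2g})^n$, with generators $\rho_{i,k}$ indexed by the strand $i\in\{1,\ldots,n\}$ and a generator $k\in\{1,\ldots,2g\}$ of $H_1(M;\mathbb{Z})$, and the holonomy action sends the cycle $c_{n}=(n,n-1,\ldots,1)$ to the cyclic permutation of the strand index. In particular, the $G_{n}$-fixed sublattice $(\mathbb{Z}^{2ng})^{G_n}$ is the diagonal sublattice, of rank $2g$. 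I take the canonical lift $\bar\alpha$ of the generator of $G_n$ coming from the braid $\alpha=\sigma_{n-1}\sigma_{n-2}\cdots\sigma_1\in B_n(M)$, where the $\sigma_i$ arise from the natural inclusion of the disc braid group $B_n\hookrightarrow B_n(M)$.

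Since $\bar\alpha^n$ is invariant under conjugation by $\bar\alpha$, it lies in the diagonal sublattice, and I compute it explicitly by applying the braid and surface relations to rewrite $\alpha^n$ as a product of pure-braid and surface-loop generators and then abelianising: the pure-braid part recovers (the image of) the classical full twist, as in~\cite{Oc}, while the surface-loop part produces an element of the diagonal sublattice that I expect to be primitive. I then define $\widetilde{G}_{n,g}$ to be the subgroup of $\sigma^{-1}(G_n)/\Gamma_2(P_n(M))$ generated by $\bar\alpha$ together with a $G_n$-invariant sublattice $L\subseteq P_n(M)/\Gamma_2(P_n(M))$ of full rank $2ng$ chosen so that $\bar\alpha^n\in L$ and the cyclic extension $1\to L \to \widetilde{G}_{n,g}\to G_n \to 1$ is torsion-free.

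Torsion-freeness is checked by the standard cohomological criterion for cyclic holonomy: for every prime divisor $p$ of $n$, the restriction of the extension class to $H^2(\mathbb{Z}/p, L) = L^{\mathbb{Z}/p}/N(L)$, where $N$ is the norm for the cyclic subgroup of order $p$, must be non-trivial, and this reduces to a coordinate check using the primitivity of $\bar\alpha^n$ established above. The dimension of $\widetilde{G}_{n,g}$ is then $\mathrm{rank}(L)=2ng$ by construction, and the centre $Z(\widetilde{G}_{n,g})$ consists of the translation elements fixed by the holonomy action, namely $L\cap (\mathbb{Z}^{2ng})^{G_n}$, which is a free abelian group of rank $2g$ since $L$ has full rank and $(\mathbb{Z}^{2ng})^{G_n}$ is the rank-$2g$ diagonal. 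The main obstacle I anticipate is the explicit computation of $\bar\alpha^n$ in terms of the $\rho_{i,k}$ and the surviving pure-braid generators: the surface braid relations are substantially more involved than the Artin ones, but passage to the abelianisation kills the commutator terms, so in effect the computation decouples into the classical full-twist calculation and a direct evaluation of the surface-loop contribution.
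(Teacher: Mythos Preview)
Your plan founders on a point that is specific to the surface case and does not occur in the disc setting of~\cite{Oc}. The element $\alpha=\sigma_{n-1}\cdots\sigma_1$ lies entirely in the image of $B_n\hookrightarrow B_n(M)$, so $\alpha^n$ is precisely the full twist $\Delta_n^2$; there is no ``surface-loop part'' at all. But by \repr{subs}(\ref{subs:5}) one has $\Delta_n^2\in\Gamma_2(P_n(M))$, so $\bar\alpha^{\,n}=1$ in $B_n(M)/\Gamma_2(P_n(M))$. Thus $\bar\alpha$ is already a torsion element of order $n$, the cyclic extension you build splits, your cohomology class is zero, and no choice of full-rank sublattice $L$ can make the group $\langle \bar\alpha, L\rangle$ torsion free. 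Your expectation that the computation ``decouples into the classical full-twist calculation and a direct evaluation of the surface-loop contribution'' is exactly where the argument breaks: in the surface quotient the full twist contributes nothing, and there is no surface-loop contribution because you put none into $\alpha$.

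The paper repairs this by \emph{inserting} a surface generator by hand: it takes $a_{1,1}\alpha_{n-1}$ rather than $\alpha_{n-1}$ as the lift of the $n$-cycle. One then computes $(a_{1,1}\alpha_{n-1})^n=\prod_{i=1}^n a_{i,1}$, which is a primitive element of the diagonal sublattice, and defines $\widetilde{G}_{n,g}=\langle a_{1,1}\alpha_{n-1},\, a_{i,r}^{\,n}\rangle$. The translation lattice $L$ is then generated by $\prod_i a_{i,1}$ together with the $a_{i,r}^{\,n}$, and torsion-freeness is proved by a direct coefficient computation (reading off the exponent of $a_{1,1}$ in $\omega^n$) rather than via the cohomological criterion; the centre is then identified with the $1$-eigenspace of the explicit block-diagonal holonomy matrix. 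Once you make the substitution $\bar\alpha\rightsquigarrow a_{1,1}\bar\alpha$, the rest of your outline (full-rank $G_n$-invariant lattice, fixed sublattice of rank $2g$ for the centre) goes through essentially as in the paper.
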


The conclusion of the first part of the statement of \reth{bieberbach} probably does not remain valid if we replace the finite cyclic group $G_{n}$ by other finite groups. In this direction, if $p$ is an odd prime, in \repr{frob}, we prove that there is no Bieberbach subgroup $H$ of $B_p(M)/[P_p(M), P_p(M)]$ for which $\overline{\sigma}(H)$ is the Frobenius group $\mathbb{Z}_p\rtimes \mathbb{Z}_{(p-1)/2}$.

It follows from the definition that crystallographic groups act properly discontinuously and cocompactly on Euclidean space, and that the action is free if the groups are Bieberbach. Thus there exists a flat manifold ${\mathcal X}_{n,g}$ whose fundamental group is the subgroup $\widetilde{G}_{n,g}$ of \reth{bieberbach}. Motivated by results about the holonomy representation of Bieberbach subgroups of the Artin braid group quotient $B_{n}/[P_{n}, P_{n}]$ whose holonomy group is a $2$-group obtained in~\cite{OR}, in \resec{3}, we make use of the holonomy representation of $\widetilde{G}_{n,g}$ given in~\reqref{actmatrix} to prove some dynamical and geometric properties of ${\mathcal X}_{n,g}$. To describe these results, we recall some definitions.  

If $\map{f}{M}$ is a self-map of a Riemannian manifold, $M$ is said to have a hyperbolic structure with respect to $f$ if there exists a splitting of the tangent bundle $T(M)$ of the form $T(M)=E^{s} \oplus E^{u}$ such that $\map{\mathrm{D}f}{E^{s}}$ (resp.\ $\map{\mathrm{D}f}{E^{u}}$) is contracting (resp.\ expanding). Further, the map $f$ is called \textit{Anosov} if it is a diffeomorphism and $M$ has hyperbolic structure with respect to $f$. The classification of compact manifolds that admit Anosov diffeomorphisms is a problem first proposed by Smale~\cite{Sm}. Anosov diffeomorphisms play an important r\^ole in the theory of dynamical systems since their behaviour is generic in some sense. Porteous gave a  criterion for the existence of Anosov diffeomorphisms of flat manifolds in terms of the holonomy representation~\cite[Theorems~6.1 and~7.1]{Po} that we shall use in the proof of \reth{anosov}. 

We recall that a K\"ahler manifold is a $2n$-real manifold endowed with a Riemannian metric, a complex structure, and a symplectic structure that is compatible at every point. For more about such manifolds, see~\cite[Chapter 7]{Sz}. A finitely-presented group is said to be a K\"ahler group if it is the fundamental group of a closed  K\"ahler manifold. We may now state \reth{anosov}. 

\begin{thm}\label{th:anosov}
Let $n\geq 2$, and let ${\mathcal X}_{n,g}$ be a $2ng$-dimensional flat manifold whose fundamental group is the Bieberbach group $\widetilde{G}_{n,g}$ of \reth{bieberbach}. Then ${\mathcal X}_{n,g}$ is an orientable K\"ahler manifold with first Betti number $2g$ that admits Anosov diffeomorphisms.
\end{thm}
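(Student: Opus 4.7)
The plan is to deduce each of the four claims from the explicit matrix form of the holonomy representation $\varphi|_{G_n}$ of $\widetilde{G}_{n,g}$ given in~\reqref{actmatrix}, where $G_n=\ang{(n,n-1,\ldots,1)}$. Since the $S_n$-action on the translation lattice $P_n(M)/\Gamma_2(P_n(M))\cong(\Z^{2g})^n$ cyclically permutes the $n$ blocks indexed by the strings---each block being a copy of $H_1(M;\Z)\cong\Z^{2g}$---the generator $\tau$ of $G_n$ acts on $\Z^{2ng}$ as the block-permutation matrix $P\otimes I_{2g}$, with $P$ the $n\times n$ cyclic permutation matrix. Orientability and the first Betti number are then immediate: $\det(\varphi(\tau))=\det(P)^{2g}=((-1)^{n-1})^{2g}=1$, so the holonomy lies in $\operatorname{SL}(2ng,\Z)$, and the $G_n$-fixed subspace of $\R^{2ng}$ is the diagonal $\{(v,\ldots,v):v\in\R^{2g}\}\cong\R^{2g}$, whence the standard identification of $H_1$ of a flat manifold with the holonomy-invariant subspace of the translation lattice yields $b_1(\mathcal{X}_{n,g})=2g$.

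For the K\"ahler structure, I would equip $\R^{2ng}=(\R^{2g})^n$ with the complex structure $J=J_0\oplus\cdots\oplus J_0$, where $J_0$ is the complex structure on $\R^{2g}\cong H_1(M;\R)$ induced by the Riemann surface structure on $M$. Since $\varphi(\tau)$ merely permutes the $n$ summands, it commutes with $J$, and the flat Euclidean metric on $\R^{2ng}$ is automatically $J$-compatible. As the associated K\"ahler form is translation-invariant and hence closed, the data $(J,\text{metric})$ descend to $\mathcal{X}_{n,g}$ as a flat K\"ahler structure.

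For the Anosov claim, I would invoke Porteous' criterion~\cite[Theorems~6.1 and~7.1]{Po}, which reduces the task to exhibiting a hyperbolic matrix in $\operatorname{GL}(2ng,\Z)$ that commutes with $\varphi(\tau)$. Decomposing the rational representation of $G_n\cong\Z/n$ on $\mathbb{Q}^{2ng}$ as $2g$ copies of the regular representation $\bigoplus_{d\mid n}\mathbb{Q}(\zeta_d)$, each $\mathbb{Q}$-irreducible isotypic component has multiplicity exactly $2g\geq 2$, and the centraliser of $\varphi(\tau)$ on the $\mathbb{Q}(\zeta_d)$-isotypic component is the matrix algebra $M_{2g}(\Z[\zeta_d])$. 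Since $2g\geq 2$, on each component one can select a $2g\times 2g$ integer matrix whose $\mathbb{Q}$-eigenvalues avoid the unit circle---for instance, the block-diagonal sum of $g$ copies of any $2\times 2$ hyperbolic matrix in $\operatorname{GL}(2,\Z)$---and assemble these into the required hyperbolic commuting element. The principal obstacle lies precisely here for the divisors $d\in\{1,2,3,4,6\}$ of $n$ for which $\Z[\zeta_d]^\times$ is finite and no hyperbolic unit is available: the hypothesis $g\geq 1$ is essential, since it is the multiplicity $2g\geq 2$ that permits a genuinely $2g\times 2g$ construction independently of $d$. The remaining three claims reduce to routine linear algebra once the block form $P\otimes I_{2g}$ is in hand.
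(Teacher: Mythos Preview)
Your overall strategy is sound, and for orientability, the first Betti number, and the K\"ahler structure your arguments go through. For the K\"ahler claim you take a genuinely different route from the paper: you exhibit an explicit $G_n$-invariant complex structure $J=J_0^{\oplus n}$ on $\R^{2ng}=(\R^{2g})^n$ and check directly that the flat K\"ahler data descend, whereas the paper computes the character of the holonomy representation to be $(2ng,0,\ldots,0)$ and invokes the Johnson--Rees/Dekimpe--Halenda--Szczepa\'nski criterion~\cite{JR,DHS} that each $\R$-irreducible summand which is also $\mathbb C$-irreducible occurs with even multiplicity. Your argument is more concrete; the paper's isolates the representation-theoretic obstruction. (One small point: the choice of $J_0$ coming from the Riemann-surface structure of $M$ is irrelevant and need not be orthogonal for the Euclidean metric on the lattice---any orthogonal $J_0$ on $\R^{2g}$ will do.)

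There is, however, a conflation running through your proposal. The translation lattice of $\widetilde{G}_{n,g}$ is not $P_n(M)/\Gamma_2(P_n(M))$ but the proper sublattice $L$ constructed in the proof of \reth{bieberbach}, on which the holonomy acts by the matrix~\reqref{actmatrix}; the block $M_1$ there is \emph{not} a permutation matrix. Over $\mathbb Q$ the two lattices carry equivalent $G_n$-representations (both are $2g$ copies of the regular module), so your determinant, fixed-subspace, and $J$-compatibility computations are unaffected. But your explicit Anosov construction does not survive as written: the hyperbolic element you assemble from the isotypic pieces lives in $\operatorname{GL}\bigl(P_n(M)/\Gamma_2(P_n(M))\bigr)$ and need not preserve $L$, and $L$ does not split integrally along the cyclotomic isotypic components, so the integral centraliser is not $M_{2g}(\Z[\zeta_d])$ on each piece. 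The paper avoids this entirely by working directly with~\reqref{actmatrix}, computing the characteristic polynomial of $\rho((1,n,\ldots,2))$ on $L$ to be $(x^n-1)^{2g}$, and citing~\cite[Theorem~7.1]{Po} straight from the eigenvalue multiplicities. Since you already observe that each $\mathbb Q$-irreducible has multiplicity $2g\geq 2$ and you cite~\cite[Theorem~7.1]{Po}, that observation alone suffices; the attempted explicit construction is both unnecessary and, as stated, incomplete.
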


The proof of \reth{anosov} depends mainly on the holonomy representation of the Bieberbach group $\widetilde{G}_{n,g}$, and makes use of the eigenvalues of the matrix representation and the decomposition of the holonomy representation in irreducible representations using character theory.

Finally, in \resec{4}, we prove in \repr{nonorientable} that the conclusion of \repr{toruscryst} no longer holds if $M$ is the sphere $\mathbb{S}^{2}$ or a compact, non-orientable surface without boundary. More precisely, if $n\geq 1$ then $B_n(M)/\Gamma_2(P_n(M))$ is not a crystallographic group. 

\section{Crystallographic groups and quotients of surface braid groups}\label{sec:2}

In this section, we start by recalling some definitions and facts about crystallographic groups. If $M$ is a compact, orientable surface without boundary of genus $g\geq 1$, in \repr{toruscryst}, we prove that the quotient $B_{n}(M)/\Gamma_2(P_n(M))$ is a crystallographic group that is isomorphic to $\mathbb{Z}^{2ng} \rtimes_{\varphi} S_n$. We also determine the conjugacy classes of the finite-order elements of $B_n(M)/\Gamma_2(P_n(M))$ in \reth{ordk0}.

\subsection{Crystallographic groups}\label{sec:cryst}

In this section, we recall briefly the definitions of crystallographic and Bieberbach groups, and the characterisation of crystallographic groups in terms of a representation that arises in certain group extensions whose kernel is a free Abelian group of finite rank and whose quotient is finite. We also recall some results concerning Bieberbach groups and the fundamental groups of flat Riemannian manifolds. For more details, see~\cite[Section~I.1.1]{Charlap},~\cite[Section~2.1]{Dekimpe} or~\cite[Chapter~3]{Wolf}. 

Let $G$ be a Hausdorff topological group. A subgroup $H$ of $G$ is said to be \emph{discrete} if it is a discrete subset. If $H$ is a closed subgroup of $G$ then the quotient space $G/H$ admits the quotient 
topology for the canonical projection $\map{\pi}{G}[G/H]$, and we say that $H$ is \emph{uniform} if $G/H$ is compact. From now on, we identify $\operatorname{\text{Aut}}(\mathbb{Z}^m)$ with $\operatorname{\text{GL}}(m,\mathbb{Z})$. A discrete, uniform subgroup $\Pi$ of $\R^m\rtimes \operatorname{\text{O}}(m,\R)\subseteq \operatorname{\text{Aff}}(\R^m)$ is said to be a \textit{crystallographic group} of dimension $m$. If in addition $\Pi$ is torsion free then $\Pi$ is called a \textit{Bieberbach group} of dimension $m$.

If $\Phi$ is a group, an \emph{integral representation of rank $m$ of $\Phi$} is defined to be a homomorphism $\map{\Theta}{\Phi}[\operatorname{\text{Aut}}(\mathbb{Z}^m)]$. Two such representations are said to be \emph{equivalent} if their images are conjugate in $\operatorname{\text{Aut}}(\mathbb{Z}^m)$. We say that $\Theta$ is a \emph{faithful representation} if it is injective. We recall the following characterisation of crystallographic groups.

\begin{lem}[{\cite[Lemma~8]{GGO}}]\label{lem:cryst}
Let $\Pi$ be a group. Then $\Pi$ is a crystallographic group if and only if there exists an integer $m\in \mathbb N$, a finite group $\Phi$ and a short exact sequence of the form:
\begin{equation}\label{eq:SeqCrist2}
0\to \mathbb{Z}^m \to \Pi \stackrel{\zeta}{\longrightarrow} \Phi \to 1,
\end{equation}
such that the integral representation $\map{\Theta}{\Phi}[\operatorname{\text{Aut}}(\mathbb{Z}^m)]$ induced by conjugation on $\mathbb{Z}^m$ and defined by $\Theta(\phi)(x)=\pi x \pi^{-1}$ for all $x\in \mathbb{Z}^{m}$ and $\phi\in \Phi$, where $\pi\in \Pi$ is such that $\zeta(\pi)=\phi$, is faithful. 
\end{lem}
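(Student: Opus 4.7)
The plan is to prove the two implications separately. For the forward direction, assuming $\Pi$ is a discrete uniform subgroup of $\R^m \rtimes \operatorname{O}(m,\R)$, I would first invoke Bieberbach's first theorem: the subgroup $T = \Pi \cap (\R^m \rtimes \brak{I})$ of pure translations is normal in $\Pi$, of finite index, and is a full-rank lattice in $\R^m$, so $T \cong \Z^m$. Setting $\Phi = \Pi/T$ gives a short exact sequence of the form \reqref{SeqCrist2}, and since $T$ is Abelian, conjugation by $\Pi$ on $T$ descends to the integral representation $\Theta$ of $\Phi$. To check faithfulness, I would compute that for $\pi = (v, A) \in \Pi$ and $t = (w, I) \in T$, one has $\pi t \pi^{-1} = (Aw, I)$, so $\Theta(\phi)$ acts as the linear part $A$ on $T$. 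If $\Theta(\phi) = \mathrm{Id}$, then $A$ fixes every lattice point; since $T$ spans $\R^m$, this forces $A = I$, so $\pi \in T$ and $\phi = 1$.

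For the converse, I plan to embed $\Pi$ as a discrete uniform subgroup of $\R^m \rtimes \operatorname{O}(m, \R)$. First, averaging any positive-definite inner product on $\R^m$ over the finite subgroup $\Theta(\Phi) \subset \operatorname{GL}(m, \Z)$ produces a $\Theta(\Phi)$-invariant inner product; choosing an orthonormal basis then gives $\Theta(\Phi) \subset \operatorname{O}(m, \R)$. Next, I would extend scalars by forming the pushout $\widetilde{\Pi}$ of the diagram $\Pi \hookleftarrow \Z^m \hookrightarrow \R^m$, yielding an exact sequence $0 \to \R^m \to \widetilde{\Pi} \to \Phi \to 1$ carrying the same underlying $\Phi$-action. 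Because $\Phi$ is finite and $\R^m$ is uniquely divisible as an Abelian group, $H^2(\Phi, \R^m) = 0$, so this sequence splits and $\widetilde{\Pi} \cong \R^m \rtimes_{\Theta} \Phi \subset \R^m \rtimes \operatorname{O}(m, \R)$. A five-lemma argument applied to the ladder comparing the two extensions then shows that the canonical map $\Pi \to \widetilde{\Pi}$ is injective, producing the desired embedding. Discreteness of $\Pi$ follows from that of $\Z^m$ in $\R^m \rtimes \operatorname{O}(m,\R)$ together with $[\Pi : \Z^m] < \infty$, and uniformity follows from the compactness of $(\R^m/\Z^m) \times \operatorname{O}(m, \R)$.

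The hardest inputs will be, in the forward direction, Bieberbach's first theorem itself, which is the substantive geometric fact and would be cited as a black box; and in the converse, the cohomological vanishing $H^2(\Phi, \R^m) = 0$ that underlies the splitting after tensoring with $\R$. The role of the faithfulness hypothesis is to guarantee that the point group is not artificially enlarged: without it, nontrivial elements of $\ker(\Theta)$ would, after the embedding is constructed, correspond to extra translations that would swell the lattice and prevent $\Phi$ from being recognised as the true holonomy group of $\Pi$.
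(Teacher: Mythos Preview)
The paper does not supply a proof of this lemma; it is simply quoted from \cite[Lemma~8]{GGO} and used as a black box. There is therefore no in-paper argument to compare your proposal against.

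Your outline is the standard one and is essentially correct. Two small points are worth tightening. First, in the converse direction, the uniformity claim should be phrased as a two-step argument: $(\R^m\rtimes\operatorname{O}(m,\R))/\Z^m$ fibres over $\operatorname{O}(m,\R)$ with torus fibres and is hence compact, and then $(\R^m\rtimes\operatorname{O}(m,\R))/\Pi$ is the quotient of this by the finite group $\Pi/\Z^m$. Your shorthand ``compactness of $(\R^m/\Z^m)\times\operatorname{O}(m,\R)$'' suggests a product structure that is not literally there. Second, your closing remark on the role of faithfulness is slightly off: if $\Theta$ has nontrivial kernel, then under the map $\R^m\rtimes_{\Theta}\Phi\to\R^m\rtimes\operatorname{O}(m,\R)$ the elements $(0,\phi)$ with $\phi\in\ker(\Theta)$ are sent to the \emph{identity}, not to extra translations; so the issue is that the embedding $\Pi\hookrightarrow\R^m\rtimes\operatorname{O}(m,\R)$ simply fails to be injective. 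Faithfulness is exactly what is needed to make the composite $\Pi\hookrightarrow\widetilde{\Pi}\cong\R^m\rtimes_{\Theta}\Phi\to\R^m\rtimes\operatorname{O}(m,\R)$ an embedding.
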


If $\Pi$ is a crystallographic group, the integer $m$, the finite group $\Phi$ and the integral representation $\map{\Theta}{\Phi}[\operatorname{\text{Aut}}(\mathbb{Z}^{m})]$ appearing in the statement of~\relem{cryst} are called the \emph{dimension}, the \emph{holonomy group} and the \emph{holonomy representation} of  $\Pi$ respectively.

We now recall the connection between Bieberbach groups and manifolds. A Riemannian manifold $M$ is called \emph{flat} if it has zero curvature at every point. By the first Bieberbach Theorem, there is a correspondence between Bieberbach groups and fundamental groups of flat Riemannian manifolds without boundary (see~\cite[Theorem~2.1.1]{Dekimpe} and the paragraph that follows it). By~\cite[Corollary~3.4.6]{Wolf}, the holonomy group of a flat manifold $M$ is isomorphic to the group $\Phi$. In 1957, Auslander and Kuranishi proved that every finite group is the holonomy group of some flat manifold~(see~{\cite[Theorem~3.4.8]{Wolf} and~\cite[Theorem~III.5.2]{Charlap}}). It is well known that a flat manifold determined by a Bieberbach group $\Pi$ is orientable if and only if the integral representation $\map{\Theta}{\Phi}[\operatorname{\text{GL}}(m,\mathbb{Z})]$ satisfies $\operatorname{\text{Im}}(\Theta) \subseteq \operatorname{\text{SL}}(m,\mathbb{Z})$~\cite[Theorem~6.4.6 and Remark~6.4.7]{Dekimpe}.  This being the case, $\Pi$ is said to be an \emph{orientable Bieberbach group}.  

\subsection{The  group $B_n(M)/\Gamma_2(P_n(M))$}

Let $M$ be a compact, orientable surface without boundary of genus $g\geq 1$. Besides showing that the group $B_n(M)/\Gamma_2(P_n(M))$ is crystallographic, we shall also be interested in the conjugacy classes of its elements by elements of $P_n(M)/\Gamma_2(P_n(M))$, as well as the conjugacy classes of its finite subgroups. In order to study these questions, it is useful to have an algebraic description of this quotient at our disposal. We will make use of the presentations of the (pure) braid groups of $M$ given in~\cite[Theorems~2.1 and~4.2]{GM}, where for all $1\leq i<j\leq n$, $1\leq r\leq 2g$ and $1\leq k\leq n$, the elements $a_{k,r}$ and $T_{i,j}$ in $B_n(M)$ are described in~\cite[Figure~9]{GM}, and for all $1\leq i\leq n-1$, the elements $\sigma_i$ are the classical generators of the Artin braid group that satisfy the \emph{Artin relations}:
\begin{equation}\label{eq:artin}
\begin{cases}
\sigma_{i}\sigma_{j}=\sigma_{j}\sigma_{i} & \text{for all $1\leq i,j\leq n-1$, $\left\lvert i-j\right\rvert \geq 2$}\\
\sigma_{i}\sigma_{i+1}\sigma_{i}=\sigma_{i+1}\sigma_{i}\sigma_{i+1} & \text{for all $1\leq i\leq n-2$.}
\end{cases}
\end{equation}
We recall that the full twist braid of $B_n(M)$, denoted by $\Delta_{n}^{2}$, is defined by:
\begin{equation}\label{eq:deffulltwist}
\Delta_{n}^{2}=(\sigma_{1}\cdots\sigma_{n-1})^{n},
\end{equation}
and is equal to: 
\begin{equation}\label{eq:fulltwist}
\Delta^{2}_{n}=A_{1,2}(A_{1,3}A_{2,3})\cdots(A_{1,n}A_{2,n}\cdots A_{n-1,n}),
\end{equation}
where for $1\leq i<j\leq n$, the elements $A_{i,j}$ are the usual Artin generators of $P_{n}$ defined by $A_{i,j}=\sigma_{j-1}\cdots\sigma_{i+1}\sigma^{2}_{i}\sigma^{-1}_{i+1}\cdots\sigma^{-1}_{j-1}$. By abuse of notation, in what follows, if $\alpha\in B_{n}(M)$, we also denote its $B_n(M)/\Gamma_2(P_n(M))$-coset by $\alpha$. The following proposition gives some relations in $B_n(M)$ that will be relevant to our study of $B_n(M)/\Gamma_2(P_n(M))$. 

\begin{prop}\label{prop:subs}
Let $M$ be a compact, orientable surface without boundary of genus
$g\geq 1$, let $1\leq i\leq n-1$, $1\leq j\leq n$ and $1\leq r\leq
2g$, and let $\widetilde{A}_{j,r}=a_{j,1}\cdots
a_{j,r-1}a^{-1}_{j,r+1}\cdots a^{-1}_{j,2g}$. The following relations
hold in $B_{n}(M)$:
\begin{enumerate}[(a)]
\item\label{it:subsa}\label{subs:1} $\sigma_{i}a_{j,r}\sigma^{-1}_{i}=
\begin{cases}
a_{i+1,r}\sigma^{-2}_{i} & \text{if $j=i$ and $r$ is even}\\
\sigma^{2}_{i}a_{i+1,r} & \text{if $j=i$ and $r$ is odd}\\
\sigma^{2}_{i}a_{i} & \text{if $j=i+1$ and $r$ is even}\\
a_{i,r}\sigma^{-2}_{i} & \text{if $j=i+1$ and $r$ is odd}\\
a_{j,r} & \text{if $j\neq i, i+1$.}
\end{cases}$
\item\label{subs:3}
$T_{i,j}=\sigma_{i}\sigma_{i+1}\cdots\sigma_{j-2}\sigma^{2}_{j-1}\sigma_{j-2}\cdots\sigma_{i}$ where $1\leq i,j\leq n$ and $i+1<j$, and $T_{i,i+1}=\sigma_{i}^2$  for all $1\leq i\leq n-1$.

\item\label{subs:4} $T_{i,j}= [a_{i,1}\cdots
a_{i,r},\widetilde{A}_{j,r}]\, T_{i,j-1}$, for all $1\leq i <j \leq n$
and $1\leq r \leq 2g$.

\item\label{subs:5} For all $1\leq i \leq j \leq n$, $T_{i,j}$ and $\Delta^{2}_{n}$ belong to $\Gamma_{2}(P_{n}(M))$.
\end{enumerate}
\end{prop}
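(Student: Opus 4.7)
The plan is to establish (a), (b), and (c) as direct consequences of the presentations of $B_n(M)$ and $P_n(M)$ given in~\cite[Theorems~2.1 and~4.2]{GM}, and to deduce (d) from (c) by induction on $j-i$ together with a base-case computation for $\sigma_i^{2}$.

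For (a), the formulas describe the interaction between the Artin generator $\sigma_i$ (a half-twist exchanging strings $i$ and $i+1$) and the surface generator $a_{j,r}$ (a loop of the $j$-th string around the $r$-th handle curve). When $j\notin\brak{i,i+1}$ the two braids are supported on disjoint portions of the surface and hence commute; when $j\in\brak{i,i+1}$, the four cases are either listed directly among the mixed relations of~\cite[Theorem~2.1]{GM} or follow from them by a brief manipulation, the parity of $r$ distinguishing between the two fundamental-group generators of each handle (the $\alpha$-type and $\beta$-type loops). For (b), $T_{i,j}$ is geometrically the pure braid in which string $i$ is dragged past strings $i+1,\ldots,j-1$, encircles string $j$, and returns along the same track; the word $\sigma_i\sigma_{i+1}\cdots\sigma_{j-2}\sigma_{j-1}^{2}\sigma_{j-2}\cdots\sigma_i$ is a direct transcription of this picture, which collapses to $\sigma_i^{2}$ when $j=i+1$.

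For (c), one exploits a relation among the defining relations of $P_n(M)$ in~\cite[Theorem~4.2]{GM} that records how $T_{i,j}$ differs from $T_{i,j-1}$ upon the introduction of the additional $j$-th strand: the product $T_{i,j}T_{i,j-1}^{-1}$ equals the commutator $[a_{i,1}\cdots a_{i,r},\widetilde{A}_{j,r}]$ in $P_n(M)$. One verifies this equality either by a short manipulation of the GM relations or by interpreting both sides as the same homotopy class in $F_n(M)$, after which the identity $T_{i,j}=[a_{i,1}\cdots a_{i,r},\widetilde{A}_{j,r}]\,T_{i,j-1}$ follows at once.

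For (d), since $[a_{i,1}\cdots a_{i,r},\widetilde{A}_{j,r}]$ lies in $\Gamma_2(P_n(M))$, part (c) yields $T_{i,j}\equiv T_{i,j-1}\pmod{\Gamma_2(P_n(M))}$, and iterating reduces the problem to the base case $T_{i,i+1}=\sigma_i^{2}\in\Gamma_2(P_n(M))$. This last fact comes from the surface-type relation in~\cite[Theorem~4.2]{GM} that, in the orientable case of genus $g\geq 1$, rewrites $\sigma_i^{2}$ as a product of commutators of the $a$-generators, reflecting the defining relation $\prod_{r=1}^{g}[\alpha_r,\beta_r]=1$ of $\pi_1(M)$ applied along the relevant strand. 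Finally, \reqref{fulltwist} writes $\Delta_n^{2}$ as a product of Artin-type pure braid generators $A_{k,l}$; each $A_{k,l}$ is a conjugate of some $\sigma_i^{2}$ by an element of $B_n(M)$, and since $\Gamma_2(P_n(M))$ is characteristic in the normal subgroup $P_n(M)$ of $B_n(M)$, it is itself normal in $B_n(M)$, whence $A_{k,l}\in\Gamma_2(P_n(M))$ and thus $\Delta_n^{2}\in\Gamma_2(P_n(M))$. The main obstacle is pinning down the precise surface relation used in the base case to force $\sigma_i^{2}$ into $\Gamma_2(P_n(M))$: it is here that the hypothesis $g\geq 1$ becomes essential, the analogous assertion failing for $M=\St$.
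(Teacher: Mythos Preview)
Your proposal is correct and, for parts~(a)--(c), follows exactly the paper's approach: each is a direct citation of the appropriate relation from~\cite{GM}.

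For part~(d) there is a small but notable difference. You stop the induction at $T_{i,i+1}=\sigma_i^{2}$ and then invoke a separate, somewhat vaguely described ``surface-type relation'' to force $\sigma_i^{2}\in\Gamma_2(P_n(M))$. The paper instead observes that $T_{i,i}=1$ (recorded in~\cite[p.~439]{GM}), so relation~(c) applied with $j=i+1$ already exhibits $T_{i,i+1}$ \emph{itself} as a commutator $[a_{i,1}\cdots a_{i,r},\widetilde{A}_{i+1,r}]$. In other words, the induction simply runs one step further and no additional relation is needed; the ``surface-type relation'' you are reaching for \emph{is} part~(c) at the boundary case. Your formulation is not wrong, but it obscures that the base case is already handled by the machinery you have set up.

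For $\Delta_n^{2}$, your route (each $A_{k,l}$ is a $B_n(M)$-conjugate of some $\sigma_i^{2}$, and $\Gamma_2(P_n(M))$ is characteristic in $P_n(M)$ hence normal in $B_n(M)$) is perfectly valid. The paper takes a slightly different path: from part~(b) and the Artin relations it extracts $A_{i,j}=T_{i,j-1}^{-1}T_{i,j}$, so $A_{i,j}\in\Gamma_2(P_n(M))$ follows immediately from the statement about the $T_{i,j}$, without appealing to normality. Both arguments are short; yours has the minor advantage of being conceptually transparent, while the paper's stays entirely within the relations already established.
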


\begin{proof}
Part~(\ref{subs:1}) is a consequence of relations~(R7) and~(R8) of \cite[Theorem~4.2, step~3]{GM}, with the exception of the case $j\neq i, i+1$, which is clear. Part~(\ref{subs:3}) is relation~(R9) of \cite[Theorem~4.2, step~3]{GM}, and part~(\ref{subs:4}) is relation~(PR3) of \cite[Theorem 4.2, presentation~1]{GM}. By~\cite[page~439]{GM}, $T_{j-1,j-1}=1$ for all $1<j\leq n+1$, and it follows from part~(\ref{subs:4}) that  $T_{j-1,j} \in \Gamma_{2}(P_{n}(M))$ for all $2\leq j\leq n$, and then by induction on $j-i$ that $T_{i,j}\in\Gamma_2(P_n(M))$ for all $1\leq i \leq  j \leq n$. Using the Artin relations~\reqref{artin} and part~(\ref{subs:3}), we have $A_{i,j}=T^{-1}_{i,j-1}T_{i,j}$ for all $1\leq i< j \leq n$, so $A_{i,j}$ also belongs to $\Gamma_{2}(P_{n}(M))$ by part~(\ref{subs:4}), and thus $\Delta_{n}^{2}\in \Gamma_{2}(P_{n}(M))$ by~\reqref{fulltwist}. 
\end{proof}

This allows us to compute the Abelianisation of $P_n(M)$.

\begin{cor}\label{cor:abel}
Let $M$ be a compact, orientable surface without boundary of genus $g\geq 1$, and let $n\geq 1$. Then the Abelianisation of $P_n(M)$ is a free Abelian group of rank $2ng$, for which $\{ a_{i,r} \,\mid\, \text{$i=1,\ldots,n$ and $r=1,\ldots, 2g$}\}$ is a basis.
\end{cor}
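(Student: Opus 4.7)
The plan is to combine two observations. First, the cosets of the generators $a_{i,r}$ alone generate the Abelianisation $P_n(M)/\Gamma_2(P_n(M))$. Second, a natural homomorphism onto $\Z^{2ng}$ sends these cosets to a basis, which forces their freeness.

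For the first step, I would invoke the presentation of $P_n(M)$ given in~\cite[Theorem~4.2]{GM}, whose generating set is $\{a_{i,r}\}\cup \{T_{i,j} : 1\leq i<j\leq n\}$. By part~(\ref{subs:5}) of \repr{subs}, every $T_{i,j}$ lies in $\Gamma_2(P_n(M))$, so the cosets of the $a_{i,r}$'s alone generate the Abelianisation.

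For the second step, I would exploit the $n$ coordinate projections $\map{p_i}{F_n(M)}[M]$ sending $(x_1,\ldots,x_n)$ to $x_i$. These induce homomorphisms $\map{(p_i)_{\ast}}{P_n(M)}[\pi_1(M)]$ that together assemble into a single map $\map{\Pi}{P_n(M)}[\pi_1(M)^n]$. Post-composing with the Abelianisation $\pi_1(M)^n\to H_1(M)^n\cong \Z^{2ng}$ yields a homomorphism $\overline{\Pi}$ that factors through $P_n(M)/\Gamma_2(P_n(M))$. From the geometric description in~\cite[Figure~9]{GM}, the braid $a_{i,r}$ moves only the $i$-th strand (along the $r$-th standard loop of $M$), so $(p_j)_{\ast}(a_{i,r})$ is the $r$-th standard generator of $\pi_1(M)$ when $j=i$, and is trivial otherwise. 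Consequently, $\overline{\Pi}(a_{i,r})$ is precisely the standard basis vector of $\Z^{2ng}$ indexed by $(i,r)$.

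Combining the two steps, the cosets of the $a_{i,r}$'s form a generating set of size $2ng$ for $P_n(M)/\Gamma_2(P_n(M))$ whose images under $\overline{\Pi}$ are a basis of $\Z^{2ng}$; this forces $\overline{\Pi}$ to be an isomorphism and the $a_{i,r}$'s to form a basis. I do not foresee a serious obstacle: the only substantive point is the first step, namely the exact form of the generating set from~\cite[Theorem~4.2]{GM}, and the key input (that every $T_{i,j}$ belongs to $\Gamma_2(P_n(M))$) has already been established in \repr{subs}.
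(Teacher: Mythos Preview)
Your argument is correct, and its first step coincides with the paper's proof: both invoke the presentation of $P_n(M)$ from \cite[Theorem~4.2]{GM} and use \repr{subs}(\ref{subs:5}) to kill the $T_{i,j}$, so that the cosets of the $a_{i,r}$ generate the Abelianisation. Where you diverge is in establishing that these generators are \emph{independent}. The paper handles this purely algebraically, relying on the fact that once the $T_{i,j}$ are set to $1$, all relations in the presentation of \cite[Theorem~4.2]{GM} become trivial in the Abelianisation, leaving a free Abelian group on the $a_{i,r}$. You instead produce an explicit homomorphism $\overline{\Pi}$ to $\Z^{2ng}$ via the coordinate projections $F_n(M)\to M$, and observe that it sends the $a_{i,r}$ to a basis. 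Your route is slightly longer but has the advantage of not requiring a line-by-line inspection of the relators; it also makes transparent the geometric reason for the rank being $2ng$, namely that each strand contributes one copy of $H_1(M)\cong\Z^{2g}$. The paper's approach is quicker once one trusts the presentation, while yours is more self-contained.
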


\begin{proof}
The result follows from the presentation of $P_{n}(M)$ given in~\cite[Theorem~4.2]{GM} and the fact that for all $1\leq i< j \leq n$, $T_{i,j}\in \Gamma_{2}(P_{n}(M))$ by \repr{subs}(\ref{subs:5}).
\end{proof}

For all $1\leq i\leq n-1$, we have $\overline{\sigma}(\sigma_{i})=\tau_{i}$, where $\tau_{i}$ denotes the transposition $(i,i+1)$ in $S_{n}$. Using \repr{subs}(\ref{it:subsa}), and identifying $\mathbb{Z}^{2ng}$  with $P_n(M)/\Gamma_2(P_n(M))$ via \reco{abel}, we obtain the induced action $\map{\varphi}{S_n}[\operatorname{\text{Aut}}(\mathbb{Z}^{2ng})]$, that for all $1\leq i\leq n-1$, $1\leq j\leq n$ and $1\leq r\leq 2g$, is defined by: 
\begin{equation}\label{eq:action}
\phi(\tau_{i})(a_{j,r})=\sigma_{i}a_{j,r}\sigma^{-1}_{i}= a_{\tau_{i}(j),r}.
\end{equation}

The following result is the analogue of~\cite[Proposition~12]{GGO} for braid groups of orientable surfaces.

\begin{prop}\label{prop:action}
Let $M$ be a compact, orientable surface without boundary of genus $g\geq 1$, and let $n\geq 1$. Let $\alpha \in B_n(M)/\Gamma_2(P_n(M))$, and let $\pi =\overline{\sigma}(\alpha^{-1})$. Then $\alpha a_{i,r}\alpha^{-1}=a_{\pi(i),r}$ in $B_n(M)/\Gamma_2(P_n(M))$ for all $1\leq i\leq n$ and $1\leq r\leq 2g$.
\end{prop}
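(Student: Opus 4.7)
The plan is to reduce the general conjugation to iterated applications of~\req{action}, which handles the case of a single Artin generator $\sigma_i$. The key enabling fact is \reco{abel}: the kernel $P_n(M)/\Gamma_2(P_n(M))$ of $\overline{\sigma}$ is free Abelian, so every pure-braid coset commutes with $a_{i,r}$ in $B_n(M)/\Gamma_2(P_n(M))$.

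I would begin by writing the image $\overline{\sigma}(\alpha)\in S_n$ as a product of transpositions $\tau_{j_1}\tau_{j_2}\cdots\tau_{j_k}$ and setting $\beta=\sigma_{j_1}\sigma_{j_2}\cdots\sigma_{j_k}\in B_n(M)$. Since $\overline{\sigma}(\beta)=\overline{\sigma}(\alpha)$, the coset $\alpha\beta^{-1}$ belongs to the Abelian kernel $P_n(M)/\Gamma_2(P_n(M))$ and hence commutes with $a_{i,r}$, giving
\begin{equation*}
\alpha a_{i,r}\alpha^{-1}=\beta a_{i,r}\beta^{-1}
\end{equation*}
in $B_n(M)/\Gamma_2(P_n(M))$.

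The proof would then be completed by induction on $k$. The base case $k=0$ is immediate, and the inductive step is a single application of~\req{action}: conjugation by $\sigma_{j_1}$ replaces any $a_{\ell,r}$ by $a_{\tau_{j_1}(\ell),r}$ in the quotient, so applying the inductive hypothesis to $\sigma_{j_2}\cdots\sigma_{j_k}$ yields
\begin{equation*}
\beta a_{i,r}\beta^{-1}=a_{\tau_{j_1}\tau_{j_2}\cdots\tau_{j_k}(i),r}
\end{equation*}
in $B_n(M)/\Gamma_2(P_n(M))$. Under the permutation conventions of the paper this is precisely $a_{\pi(i),r}$ with $\pi=\overline{\sigma}(\alpha^{-1})$.

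The main (and only) obstacle is careful bookkeeping of the composition of transpositions and of the relationship between $\overline{\sigma}(\alpha)$ and $\pi=\overline{\sigma}(\alpha^{-1})$; once~\req{action} is extended multiplicatively to the whole of $S_n$, the resulting permutation is independent of the chosen decomposition into transpositions, so the argument is otherwise routine.
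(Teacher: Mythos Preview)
Your proposal is correct and takes essentially the same approach as the paper: the paper's proof simply says it is analogous to \cite[Proposition~12]{GGO} and uses~\reqref{action}, leaving the details to the reader, and your argument---reducing to a product of the $\sigma_{j}$ via the Abelian kernel (\reco{abel}) and then iterating~\reqref{action}---is exactly the expected expansion of those details. Your final caveat about composition conventions is well placed; that is indeed the only delicate point.
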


\begin{proof}
The proof is similar to that of~\cite[Proposition~12]{GGO}, and makes use of~\reqref{action}. The details are left to the reader.
\end{proof}

We now give a presentation of $B_n(M)/\Gamma_2(P_n(M))$. 

\begin{prop}\label{prop:pres_quo}
Let $M$ be a compact, orientable surface without boundary of genus $g\geq 1$, and let $n\geq 1$. The quotient group $B_n(M)/\Gamma_2(P_n(M))$ has the following presentation:

\noindent
Generators: $\sigma_{1},\ldots, \sigma_{n-1}, a_{i,r},\,1\leq i \leq n,\, 1\leq r \leq 2g$.

\noindent
Relations:
\begin{enumerate}[(a)]
\item\label{it:pq1} the Artin relations~\reqref{artin}.
	
\item\label{it:pq2} $\sigma^{2}_{i}=1$, for all $i=1,\ldots, n-1$.
	
\item\label{it:pq3} $[a_{i,r},a_{j,s}]=1$, for all $i,j=1,\ldots,n$ and $r,s=1,\ldots, 2g$.
	
\item\label{it:pq4} $\sigma_{i}a_{j,r}\sigma^{-1}_{i}= a_{\tau_{i}(j),r}$ for all $1\leq i\leq n-1$, $1\leq j\leq n$ and $1\leq r\leq 2g$. 
\end{enumerate}
\end{prop}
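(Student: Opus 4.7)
The plan is to start from the presentation of $B_{n}(M)$ given in~\cite[Theorems~2.1 and~4.2]{GM}, pass to the quotient by $\Gamma_{2}(P_{n}(M))$, and use \repr{subs} and \reco{abel} to reduce every relation to one of the four families (a)--(d). First I verify that relations (a)--(d) hold in $B_{n}(M)/\Gamma_{2}(P_{n}(M))$: the Artin relations (a) hold already in $B_{n}(M)$; relation (b) follows from \repr{subs}(\ref{subs:3}) and \repr{subs}(\ref{subs:5}), since $\sigma_{i}^{2}=T_{i,i+1}\in\Gamma_{2}(P_{n}(M))$; relations (c) are immediate, since $[a_{i,r},a_{j,s}]\in\Gamma_{2}(P_{n}(M))$ by definition; and relations (d) follow from \repr{subs}(\ref{subs:1}) combined with (b), because in every case the right-hand side of the formula in \repr{subs}(\ref{subs:1}) differs from $a_{\tau_{i}(j),r}$ only by a factor $\sigma_{i}^{\pm 2}$ that is trivialised in the quotient.

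For the converse, let $G$ denote the group defined by the presentation in the statement. The preceding paragraph shows that sending each generator of $G$ to its $\Gamma_{2}(P_{n}(M))$-coset extends to a surjective homomorphism $\Psi\colon G\to B_{n}(M)/\Gamma_{2}(P_{n}(M))$. Since the Artin relations together with $\sigma_{i}^{2}=1$ form the standard Coxeter presentation of $S_{n}$, the assignment $\sigma_{i}\mapsto\tau_{i}$, $a_{j,r}\mapsto 1$ respects (a)--(d) and yields a surjection $\pi\colon G\to S_{n}$ compatible with $\overline{\sigma}\circ\Psi$. Let $K=\ker{\pi}$. By (c) and (d), the normal closure of $\{a_{j,r}\}$ in $G$ coincides with the abelian subgroup generated by $\{a_{j,r}\}$, so $K$ is a quotient of $\mathbb{Z}^{2ng}$. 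The restriction $\Psi|_{K}$ maps $K$ onto $P_{n}(M)/\Gamma_{2}(P_{n}(M))$, which is free abelian of rank $2ng$ on $\{a_{i,r}\}$ by \reco{abel}. Composing the surjections $\mathbb{Z}^{2ng}\twoheadrightarrow K\twoheadrightarrow\mathbb{Z}^{2ng}$ yields a self-epimorphism of $\mathbb{Z}^{2ng}$, which is an isomorphism by Hopficity; hence both maps are isomorphisms. Applying the Five Lemma to~\reqref{sestorus1} and to $1\to K\to G\to S_{n}\to 1$ then forces $\Psi$ to be an isomorphism.

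The one genuinely delicate point is to be sure that no relation from the full \cite{GM} presentation of $B_{n}(M)$ imposes an independent constraint beyond (a)--(d); in particular, one must worry about the surface relation, the relations defining $T_{i,j}$, and the pure-braid relation (PR3) of \repr{subs}(\ref{subs:4}). These are absorbed uniformly by the Hopficity argument above: since $K$ is squeezed between two copies of $\mathbb{Z}^{2ng}$, no such hidden relation can reduce its rank, so all of them must already be consequences of (c) after the images of $T_{i,j}$ and $\Delta_{n}^{2}$ are set to $1$ by \repr{subs}(\ref{subs:5}). I expect this bookkeeping to be the only nontrivial part of the verification, and it is handled cleanly by the abelian-kernel computation rather than by grinding through the original relations one by one.
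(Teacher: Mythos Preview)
Your proof is correct. The paper's own argument is considerably shorter: it simply invokes the standard recipe for presenting a group extension~\cite[Proposition~1, p.~139]{Johnson} applied to the short exact sequence~\reqref{sestorus1}, feeding in the presentation of the kernel from \reco{abel}, the Coxeter presentation of $S_{n}$ for relations~(\ref{it:pq1}) and~(\ref{it:pq2}), and the conjugation formula~\reqref{action} for relation~(\ref{it:pq4}). Your argument is in effect an in-place proof of Johnson's recipe for this particular extension: you build the obvious surjection $\Psi$ from the abstractly presented group $G$ to the quotient, and then squeeze the kernel between two copies of $\mathbb{Z}^{2ng}$, using Hopficity and the Five Lemma to force $\Psi$ to be an isomorphism. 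This has the merit of being self-contained and of making explicit \emph{why} none of the more elaborate relations in the \cite{GM} presentation of $B_{n}(M)$ (the surface relation, the $T_{i,j}$ relations, etc.) can impose any further constraint---they are all absorbed by the rank count on the abelian kernel, exactly as you note in your third paragraph. The paper's route is more economical but leans on the reader knowing or looking up the extension-presentation machinery; yours trades that citation for a short direct computation.
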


\begin{proof}
The given presentation of $B_n(M)/\Gamma_2(P_n(M))$ may be obtained by applying the standard method for obtaining a presentation of a group extension~\cite[Proposition~1, p.~139]{Johnson} to the short exact sequence~\reqref{sestorus1}  for $M$ satisfying the hypothesis, and using \reco{abel}, the equality $\overline{\sigma}(\sigma_{i})=\tau_{i}$ for all $i=1,\ldots,n-1$, and the fact that the relations~(\ref{it:pq1}) and~(\ref{it:pq2}) constitute a presentation of $S_{n}$ for the generating set $\{ \tau_{1},\ldots, \tau_{n-1}\}$.
\end{proof}

We may now prove \repr{toruscryst}.

\begin{proof}[Proof of \repr{toruscryst}]
Assume that $n\geq 2$. The short exact sequence~\reqref{sestorus} is obtained from~\reqref{sestorus1} using \reco{abel}. To prove that the short exact sequence~\reqref{sestorus} splits,  let $\map{\psi}{S_n}[B_n(M)/\Gamma_2(P_n(M))]$ be the map defined on the generating set $\{ \tau_{1},\ldots, \tau_{n-1}\}$ of $S_{n}$ by $\psi(\tau_{i})=\sigma_i$ for all $i=1,\ldots, n-1$. Relations~(\ref{it:pq1}) and~(\ref{it:pq2}) of \repr{pres_quo} imply that $\psi$ is a homomorphism.
Consider the action $\map{\varphi}{S_n}[\operatorname{\text{Aut}}(\mathbb{Z}^{2ng})]$ defined by~\reqref{action}. By \repr{action}, $\varphi(\theta)$ is the identity automorphism if and only if $\theta$ is the trivial permutation, from which it follows that $\varphi$ is injective. 
The rest of the statement of \repr{toruscryst} is a consequence of \relem{cryst}.
\end{proof}

\begin{cor}\label{cor:subgroup}
Let $M$ be a compact, orientable surface without boundary of genus $g\geq 1$, let $n\geq 2$, and let $H$ be a subgroup of $S_n$. Then the group ${\sigma}^{-1}(H)/\Gamma_2(P_n(M))$ is a crystallographic group of dimension $2ng$ whose holonomy group is $H$.
\end{cor}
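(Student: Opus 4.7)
The plan is to restrict the crystallographic extension of \repr{toruscryst} to the preimage of $H$ and check that all the crystallographic hypotheses pass to the subgroup. First, observe that $P_{n}(M) \subset \sigma^{-1}(H)$, so $\Gamma_{2}(P_{n}(M))$ is a normal subgroup of $\sigma^{-1}(H)$ and the quotient $\sigma^{-1}(H)/\Gamma_{2}(P_{n}(M))$ makes sense. Under the canonical projection $B_{n}(M) \to B_{n}(M)/\Gamma_{2}(P_{n}(M))$, the image of $\sigma^{-1}(H)$ coincides with $\overline{\sigma}^{-1}(H)$, so we may identify $\sigma^{-1}(H)/\Gamma_{2}(P_{n}(M))$ with $\overline{\sigma}^{-1}(H)$ inside $B_{n}(M)/\Gamma_{2}(P_{n}(M))$.

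Next, restrict the short exact sequence~\reqref{sestorus} of \repr{toruscryst} to $\overline{\sigma}^{-1}(H)$. Since $\overline{\sigma}^{-1}(H)$ contains the kernel $\mathbb{Z}^{2ng} = P_{n}(M)/\Gamma_{2}(P_{n}(M))$, we obtain the short exact sequence
\begin{equation*}
1 \to \mathbb{Z}^{2ng} \to \overline{\sigma}^{-1}(H) \stackrel{\overline{\sigma}|}{\longrightarrow} H \to 1.
\end{equation*}
Moreover, the splitting $\map{\psi}{S_{n}}[B_{n}(M)/\Gamma_{2}(P_{n}(M))]$ of \repr{toruscryst} satisfies $\psi(H) \subset \overline{\sigma}^{-1}(H)$, so its restriction $\psi|_{H}$ splits this extension (although, as noted below, this is not strictly needed for the crystallographic conclusion).

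Finally, to apply \relem{cryst} it suffices to verify that the integral representation $\map{\Theta}{H}[\operatorname{\text{Aut}}(\mathbb{Z}^{2ng})]$ induced by conjugation in $\overline{\sigma}^{-1}(H)$ on the free Abelian kernel is faithful. But this representation is nothing other than the restriction to $H$ of the holonomy representation $\map{\varphi}{S_{n}}[\operatorname{\text{Aut}}(\mathbb{Z}^{2ng})]$ of \repr{toruscryst}, which is faithful on all of $S_{n}$; hence $\Theta = \varphi|_{H}$ is faithful, and \relem{cryst} yields that $\overline{\sigma}^{-1}(H) = \sigma^{-1}(H)/\Gamma_{2}(P_{n}(M))$ is a crystallographic group of dimension $2ng$ whose holonomy group is $H$. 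There is no real obstacle here: the entire argument is a mechanical restriction of data from the ambient crystallographic extension, exploiting the faithfulness of $\varphi$, which is the only nontrivial ingredient and has already been established.
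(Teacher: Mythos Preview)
Your proof is correct and follows the same approach as the paper; the paper simply cites \repr{toruscryst} together with \cite[Corollary~10]{GGO}, and what you have written is precisely the content of that cited corollary spelled out explicitly (restricting the crystallographic extension to the preimage of $H$ and noting that faithfulness of $\varphi$ passes to any subgroup).
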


\begin{proof}
The result is an immediate consequence of \repr{toruscryst} and \cite[Corollary~10]{GGO}.
\end{proof}

We now turn to the proof of \reth{ordk0}. For this, we will require the following lemma.

\begin{lem}\label{lem:coeftjr}
Let $M$ be a compact, orientable surface without boundary of genus $g\geq 1$, and let $n\geq 1$. Let $z\in B_{n}(M)/\Gamma_2(P_n(M))$. Let $z=\omega\prod^{n}_{i=1}\prod^{2g}_{r=1}a^{s_{i,r}}_{i,r}\in B_{n}(M)/\Gamma_2(P_n(M))$, where $\omega=\psi(\overline{\sigma}(z))$, and $s_{i,r}\in \Z$ for all $1\leq i\leq n$ and $1\leq r\leq 2g$. Suppose that $\overline{\sigma}(z)$ is the $m$-cycle $(l_1,\ldots,l_m)$, where $m\geq 2$, and let $k\in \N$ be such that $m$ divides $k$. Then $z^k=\prod^{n}_{i=1}\prod^{2g}_{r=1}a_{i,r}^{t_{i,r}}$ where for all $1\leq i\leq n$ and $1\leq r\leq 2g$, $t_{i,r}\in \Z$ is given by:
\begin{equation}\label{eq:deftij}
t_{i,r}=\begin{cases}ks_{i,r} & \text{if $i\notin \{l_1,\ldots,l_m\}$}\\
\frac{k}{m}\Sigma_{j=1}^{m} s_{l_j,r}& \text{if $i=l_{j}$, where $1\leq j\leq m$.}
\end{cases}
\end{equation}
\end{lem}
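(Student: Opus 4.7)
The plan is to expand $z^k = (\omega A)^k$, where $A = \prod_{i,r} a_{i,r}^{s_{i,r}}$, by pushing all the $\omega$'s to the left, and then use \repr{action} to track how conjugation by powers of $\omega$ permutes the $a_{i,r}$'s. First I would observe that because $\psi$ is a homomorphism (as shown in the proof of \repr{toruscryst}) and $\overline{\sigma}(z) = \sigma = (l_1,\ldots,l_m)$ is an $m$-cycle, we have $\omega^m = \psi(\sigma)^m = \psi(\sigma^m) = 1$, so the hypothesis $m \mid k$ gives $\omega^k = 1$.

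Next, a telescoping expansion yields
\begin{equation*}
z^k = (\omega A)^k = \omega^k \prod_{j=0}^{k-1}\bigl(\omega^{-j} A \omega^{j}\bigr) = \prod_{j=0}^{k-1}\bigl(\omega^{-j} A \omega^{j}\bigr),
\end{equation*}
where the ordering of the product is irrelevant since all factors lie in the Abelian subgroup $P_n(M)/\Gamma_2(P_n(M))$. Applying \repr{action} with $\alpha = \omega^{-j}$, so $\overline{\sigma}(\alpha^{-1}) = \sigma^{j}$, yields $\omega^{-j} a_{i,r} \omega^{j} = a_{\sigma^{j}(i), r}$, and therefore
\begin{equation*}
z^{k} = \prod_{j=0}^{k-1}\prod_{i,r} a_{\sigma^{j}(i),r}^{s_{i,r}} = \prod_{i,r} a_{i,r}^{t_{i,r}} \quad \text{where} \quad t_{i,r} = \sum_{j=0}^{k-1} s_{\sigma^{-j}(i), r}.
\end{equation*}

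Finally I would read off the two cases of~\reqref{deftij} from this sum. If $i \notin \{l_1,\ldots,l_m\}$ then $\sigma$ fixes $i$, so every summand equals $s_{i,r}$ and $t_{i,r} = k s_{i,r}$. If $i = l_j$, the sequence $\sigma^{-j}(l_j), \sigma^{-j-1}(l_j), \ldots$ cycles through $\{l_1,\ldots,l_m\}$ with period $m$; since $m \mid k$, each $l_q$ appears exactly $k/m$ times in the range $0 \le j \le k-1$, giving $t_{l_j,r} = (k/m)\sum_{q=1}^{m} s_{l_q,r}$, as required.

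The only subtle point is the direction of the action (the $\pi = \overline{\sigma}(\alpha^{-1})$ convention in \repr{action}); otherwise the argument is a straightforward bookkeeping calculation inside the Abelian kernel.
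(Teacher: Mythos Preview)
Your proof is correct and follows essentially the same approach as the paper's own proof: both expand $(\omega A)^k$ by pushing the powers of $\omega$ to the left, invoke \repr{action} to rewrite each conjugate $\omega^{-j}A\omega^{j}$, and then collect exponents in the Abelian kernel to obtain $t_{i,r}=\sum_{j} s_{\sigma^{-j}(i),r}$, from which the two cases of~\reqref{deftij} are read off. The only differences are cosmetic (your index runs over $0\leq j\leq k-1$ rather than the paper's $1\leq j\leq k$ with a shift).
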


\begin{proof} 
Let $z\in B_{n}(M)/\Gamma_2(P_n(M))$ be as in the statement, let $\overline{\sigma}(z)=(l_1,\ldots,l_m)$, where $m\geq 2$, and let $\omega=\psi(\overline{\sigma}(z))$. By \repr{toruscryst}, $\omega$ is of order $m$, and the decomposition $z=\omega\prod^{n}_{i=1}\prod^{2g}_{r=1}a^{s_{i,r}}_{i,r}$ arises from~\reqref{sestorus}. By \reco{abel} and \repr{action} and using the fact that $\omega^{k}=1$, we obtain: 
\begin{align}
z^{k}&= \left( \omega\prod^{n}_{i=1}\prod^{2g}_{r=1}a^{s_{i,r}}_{i,r}\right)^{k}= \omega^{k} \left[ \prod_{j=1}^{k} \omega^{j-k} \left( \prod^{n}_{i=1} \prod^{2g}_{r=1}a^{s_{i,r}}_{i,r} \right)\omega^{k-j} \right]= \prod_{j=1}^{k} \prod^{n}_{i=1}\prod^{2g}_{r=1}a^{s_{i,r}}_{\overline{\sigma}(\omega^{k-j})(i),r}\notag\\
&=  \prod^{n}_{i=1}\prod^{2g}_{r=1} \prod_{j=1}^{k} a^{s_{\overline{\sigma}(\omega^{j-k})(i),r}}_{i,r}= \prod^{n}_{i=1}\prod^{2g}_{r=1} a^{t_{i,r}}_{i,r},\label{eq:thetak}
\end{align}
where $t_{i,r}= \sum_{j=1}^{k} s_{\overline{\sigma}(\omega^{j-k})(i),r}$ for all $1\leq i\leq n$ and $1\leq r\leq 2g$. Equation~\reqref{deftij} then follows, using the fact that $\overline{\sigma}(\omega^{-u})(l_i)=l_{i-u}$ for all $u\in \Z$, where the index $i-u$ is taken modulo $m$.
\end{proof}

We now prove \reth{ordk0}.

\begin{proof}[Proof of \reth{ordk0}]\mbox{}
\begin{enumerate}[(a)]
\item Let $\map{\psi}{S_n}[B_n(M)/\Gamma_2(P_n(M))]$ be the section for the short exact sequence~\reqref{sestorus} given in the proof of Proposition \ref{prop:toruscryst}, and let $e_{1}$ and $e_{2}$ be finite-order elements of $B_n(M)/\Gamma_2(P_n(M))$. If $e_{1}$ and $e_{2}$ are conjugate in $B_n(M)/\Gamma_2(P_n(M))$ then $\overline{\sigma}(e_1)$ and $\overline{\sigma}(e_2)$ are conjugate in $S_{n}$, and so have the same cycle type. Conversely, suppose that the permutations $\overline{\sigma}(e_1)$ and $\overline{\sigma}(e_2)$ have the same cycle type. Then they are conjugate in $S_{n}$, so there exists $\xi\in S_{n}$ such that $\overline{\sigma}(e_1)=\xi \overline{\sigma}(e_2) \xi^{-1}$, and up to substituting $e_{2}$ by $\psi(\xi^{-1}) e_{2} \psi(\xi)$ if necessary, we may assume that $\overline{\sigma}(e_1)=\overline{\sigma}(e_2)$. We claim that if $\theta$ is any finite-order element of $B_n(M)/\Gamma_2(P_n(M))$ then $\theta$ and $\psi(\overline{\sigma}(\theta))$ are conjugate in $B_n(M)/\Gamma_2(P_n(M))$. This being the case, for $i=1,2$, $e_{i}$ is conjugate to $\psi(\overline{\sigma}(e_{i}))$, but since $\psi(\overline{\sigma}(e_{1}))=\psi(\overline{\sigma}(e_{2}))$, it follows that $e_{1}$ and $e_{2}$ are conjugate as required, which proves the first part of the statement. To prove the claim, set $\theta=\omega\prod^{n}_{i=1}\prod^{2g}_{r=1}a^{s_{i,r}}_{i,r}\in B_{n}(M)/\Gamma_2(P_n(M))$ as in the proof of \relem{coeftjr}, where $\omega= \psi(\overline{\sigma}(\theta))$. It thus suffices to prove that $\theta$ and $\omega$ are conjugate in $B_n(M)/\Gamma_2(P_n(M))$. Let $\overline{\sigma}(\theta)= \tau_{1}\cdots \tau_{d}$ be the cycle decomposition of $\overline{\sigma}(\theta)$, where for $i=1,\ldots,d$, $\tau_{i}=(l_{i,1},\ldots,l_{i,m_{i}})$ is an $m_{i}$-cycle, and $m_{i}\geq 2$, and let $k=\operatorname{\text{lcm}}(m_{1},\ldots,m_{d})$ be the order of $\overline{\sigma}(\theta)$, which is also the order of $\theta$ and of $\omega$. For $t=1,\ldots,n$ and $r=1,\ldots, 2g$, we define $p_{t,r}\in \Z$ as follows. If $t$ does not belong to the support $\operatorname{\text{Supp}}(\overline{\sigma}(\theta))$ of $\overline{\sigma}(\theta)$, let $p_{t,r}=0$. If $t\in \operatorname{\text{Supp}}(\overline{\sigma}(\theta))$ then $t=l_{i,q}$ for some $1\leq i\leq d$ and $1\leq q \leq m_{i}$, and we define $p_{t,r}= -\sum_{j=1}^{q} s_{l_{i,j},r}$. It follows from~\relem{coeftjr} and the fact that $\theta$ is of order $k$ that $p_{l_{i,m_{i}},r}=0$, for all $i=1,\ldots,d$. Then for all $1\leq i\leq d$, $2\leq q\leq m_{i}$ and $1\leq r\leq 2g$, we have:
\begin{equation}\label{eq:diffp}
\text{$p_{l_{i,q-1},r}-p_{l_{i,q},r}=s_{l_{i,q},r}$ and $p_{l_{i,m_{i}},r}-p_{l_{i,1},r}=-p_{l_{i,1},r}=s_{l_{i,1},r}$.}
\end{equation}
Let $\alpha=\prod^{n}_{i=1}\prod^{2g}_{r=1}a^{p_{i,r}}_{i,r}\in P_{n}(M)/\Gamma_2(P_n(M))$. By \reco{abel} and \repr{action}, we have:
\begin{align*}
\alpha \omega \alpha^{-1} &= \omega\ldotp \omega^{-1} \left(\prod^{n}_{i=1}\prod^{2g}_{r=1}a^{p_{i,r}}_{i,r} \right)\omega \ldotp \prod^{n}_{i=1}\prod^{2g}_{r=1}a^{-p_{i,r}}_{i,r} = \omega \left( \prod^{n}_{i=1}\prod^{2g}_{r=1}a^{p_{\overline{\sigma}(\omega^{-1})(i),r}}_{i,r} \right) \ldotp \prod^{n}_{i=1}\prod^{2g}_{r=1}a^{-p_{i,r}}_{i,r}\\
&= \omega\prod^{n}_{i=1}\prod^{2g}_{r=1}a^{p_{\overline{\sigma}(\omega^{-1})(i),r}-p_{i,r}}_{i,r}=\omega \prod^{n}_{i=1}\prod^{2g}_{r=1}a^{s_{i,r}}_{i,r}=\theta,
\end{align*}
where we have also made use of~\reqref{diffp}. Thus $\theta$ is conjugate to $\omega$, which proves the claim, and thus the first part of the statement. 

\item We start by showing that if $H\subset B_n(M)/\Gamma_2(P_n(M))$ is isomorphic to $S_n$ then $H$ and $\psi(\overline{\sigma}(H))$ are conjugate. This being the case, it follows that each of the subgroups $H_1, H_2$ is conjugate to $\psi(\overline{\sigma}(H))$, and the result follows. Suppose that $H$ is a group isomorphic to $S_{n}$ that embeds in $(\mathbb{Z}^n \oplus\cdots\oplus \mathbb{Z}^n)\rtimes S_n$. Using the $\mathbb{Z}[S_n]$-module structure of $\mathbb{Z}^{2ng}$ given above, it follows that $H$ embeds in $\mathbb{Z}^{n}\rtimes S_n$, for any one of $2g$ summands of $\mathbb{Z}^{n}$. Let us first prove that the result holds for such an embedding. Let $\map{s}{H}[\mathbb{Z}^{n}\rtimes S_n]$ be an embedding, let $\map{\pi}{\mathbb{Z}^{n}\rtimes S_n}[S_{n}]$ be projection onto the second factor, and let $\map{\psi}{S_{n}}[\mathbb{Z}^{n}\rtimes S_n]$ be inclusion into the second factor. Since $\ker{\pi}=\Z^{n}$ is torsion free,  the restriction of $\pi$ to $s(H)$ is injective, and so $\map{\pi\circ s}{H}[S_{n}]$ is an isomorphism. Let us prove that the subgroups $s(H)$ and $S_{n}$ of $\mathbb{Z}^{n}\rtimes S_n$ are conjugate. Let $\brak{\tau_1,\ldots,\tau_{n-1}}$ be the generating set of $S_{n}$ defined previously, and for $i=1,\ldots,n-1$, let $\alpha_{i}$ be the unique element of $H$ for which $\pi\circ s(\alpha_{i})=\tau_{i}$. Then $H$ is generated by $\brak{\alpha_1,\ldots,\alpha_{n-1}}$, subject to the Artin relations and $\alpha_{i}^{2}=1$ for all $i=1,\ldots,n-1$. There exist $a_{i,j}\in \Z$, where $1\leq i\leq n-1$ and $1\leq j\leq n$, such that $s(\alpha_{i})=(a_{i,1},\ldots,a_{i,n})\tau_{i}$ in $\mathbb{Z}^{n}\rtimes S_n$. Using the action of $S_{n}$ on $\Z^{n}$ and the fact that $\alpha_{i}^{2}=1$, it follows that $a_{i,j}=0$ for all $j\neq i,i+1$, and $a_{i,i+1}=-a_{i,i}$. Then $s(\alpha_{i})=(0,\ldots,0,a_i,-a_i,0,\ldots,0)\tau_i$ for all $1\leq i\leq n-1$, where  $a_i=a_{i,i}$, and the element $a_{i}$ is in the $i$th position. One may check easily that these elements also satisfy the Artin relations in $\mathbb{Z}^{n}\rtimes S_n$. Let $x_{1}\in \Z$, and for $i=2,\ldots, n$, let $x_{i}= x_{1}-\sum_{j=1}^{i-1}a_j$. Thus $x_{i}-x_{i+1}=a_{i}$ for all $i=1,\ldots, n-1$, and so $(x_1,x_2,\ldots, x_{i},x_{i+1},\ldots, x_n)\tau_i(-x_1,-x_2,\ldots,-x_{i},-x_{i+1},\ldots-x_n)= (0,\ldots,0,a_i,-a_i,0, \ldots,0) \tau_i= s(\alpha_{i})$. We conclude that the subgroups $s(H)$ and $S_{n}$ of $\mathbb{Z}^{n}\rtimes S_n$ are conjugate. 

Returning to the general case where $H$ embeds in $(\mathbb{Z}^n \oplus\cdots\oplus \mathbb{Z}^n)\rtimes S_n$, the previous paragraph shows that the embedding of $H$ into each $\mathbb{Z}^{n}\rtimes S_n$ is conjugate by an element of the same factor $\mathbb{Z}^{n}$ to the factor $S_{n}$. The result follows by conjugating by the element of $\mathbb{Z}^n \oplus\cdots\oplus \mathbb{Z}^n$ whose $i$th factor is the conjugating element associated to the $i$th copy of $\mathbb{Z}^{n}\rtimes S_n$  for all $i=1,\ldots,2g$.\qedhere
\end{enumerate}
\end{proof}

With the statement of \reth{ordk0} in mind, one may ask whether the result of the claim extends to other finite subgroups. More precisely, if $G$ is a finite subgroup of $B_n(M)/\Gamma_2(P_n(M))$, are $G$ and $\psi(\overline{\sigma}(G))$ conjugate? We have a positive answer in the following case. 

\begin{prop}\label{prop:embedd}
Let $M$ be a compact, orientable surface without boundary of genus $g\geq 1$. If $H_1$ and $H_2$ are subgroups of $B_{5}(M)/\Gamma_2(P_5(M))$ that are isomorphic to the Frobenius group $\mathbb{Z}_5\rtimes \mathbb{Z}_{2}$ then they are conjugate.
\end{prop}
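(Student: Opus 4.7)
The plan is to adapt the strategy from the proof of \reth{ordk0}(\ref{it:ordk0b}). Since $P_{5}(M)/\Gamma_{2}(P_{5}(M))\cong\Z^{10g}$ is torsion free by \reco{abel}, the restriction of $\overline{\sigma}$ to any finite subgroup is injective, so $\overline{\sigma}(H_{1})$ and $\overline{\sigma}(H_{2})$ are copies of $\Z_{5}\rtimes\Z_{2}$ in $S_{5}$. A direct application of Sylow's theorems shows that all such subgroups of $S_{5}$ are conjugate: each contains a unique Sylow $5$-subgroup $P$, all Sylow $5$-subgroups of $S_{5}$ are conjugate, and there is a unique copy of $\Z_{5}\rtimes\Z_{2}$ inside $N_{S_{5}}(P)\cong\Z_{5}\rtimes\Z_{4}$ containing $P$. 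Using the section $\psi$ from the proof of \repr{toruscryst}, we may then replace $H_{2}$ by a conjugate so that $\overline{\sigma}(H_{1})=\overline{\sigma}(H_{2})=K$ for a fixed subgroup $K\subset S_{5}$. The task then reduces to showing that any lift $H\subset B_{5}(M)/\Gamma_{2}(P_{5}(M))$ of $K$ isomorphic to $\Z_{5}\rtimes\Z_{2}$ is conjugate to $\psi(K)$.

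Exactly as in the proof of \reth{ordk0}(\ref{it:ordk0b}), $\Z^{10g}$ decomposes as a direct sum of $2g$ copies of the permutation $\Z[S_{5}]$-module $\Z^{5}$, so after projecting onto each summand it suffices to prove that any subgroup of $\Z^{5}\rtimes S_{5}$ isomorphic to $\Z_{5}\rtimes\Z_{2}$ whose image in $S_{5}$ equals $K$ is $\Z^{5}$-conjugate to $\{0\}\times K$; carrying out this conjugation componentwise in each summand then settles the general case. Fix generators $\rho,\iota$ of $K$ with $\rho$ of order $5$, $\iota$ of order $2$ and $\iota\rho\iota=\rho^{-1}$, and write the lifted generators of $H$ as $(v,\rho)$ and $(w,\iota)$ for some $v,w\in\Z^{5}$. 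The relations $\rho^{5}=\iota^{2}=1$ and $\iota\rho\iota=\rho^{-1}$ translate into
\begin{equation*}
(1+\rho+\rho^{2}+\rho^{3}+\rho^{4})v=0,\qquad (1+\iota)w=0,\qquad (1-\rho)w=(1+\rho\iota)v.
\end{equation*}

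The first relation shows $v$ lies in the augmentation kernel, so there exists $x\in\Z^{5}$ with $(1-\rho)x=-v$; conjugation by $(x,1)$ then sends $(v,\rho)$ to $(0,\rho)$. To finish the argument it suffices to verify that $(1-\iota)x=-w$ for this same $x$. Using the identity $(1-\rho)(1-\iota)=(1+\rho\iota)(1-\rho)$ in $\Z[S_{5}]$, which follows from $\rho\iota\rho=\iota$, together with the third constraint above, one deduces that $(1-\iota)x+w$ is annihilated by $1-\rho$ and hence equals $c\cdot(1,1,1,1,1)$ for some $c\in\Z$. The main obstacle is the integrality step of showing that $c=0$: applying the $S_{5}$-invariant augmentation $\epsilon\colon\Z^{5}\to\Z$ yields $\epsilon((1-\iota)x)=0$ and $\epsilon(w)=0$ (the latter since $2\epsilon(w)=\epsilon((1+\iota)w)=0$ and $\Z$ is torsion free), whence $5c=0$ and so $c=0$. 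Thus $(1-\iota)x=-w$, and conjugation by $(x,1)$ realises $H$ as $\{0\}\times K$, which completes the plan.
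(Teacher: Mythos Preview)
Your proof is correct and follows the same overall route as the paper: reduce, via the $\Z[S_5]$-module splitting $\Z^{10g}\cong(\Z^5)^{2g}$, to showing that any lift of $K$ inside a single factor $\Z^{5}\rtimes S_{5}$ is $\Z^{5}$-conjugate to $\{0\}\times K$, and then reassemble componentwise. The difference is purely in how that last step is executed. The paper fixes the concrete generators $w_{1}=(1,2,3,4,5)$ and $w_{2}=(1,4)(2,3)$, writes the lifted generators explicitly as in~\reqref{imsw}, uses the relation $w_{2}w_{1}w_{2}^{-1}=w_{1}^{-1}$ to pin down the parameters as in~\reqref{eqxy}, and then exhibits an explicit conjugating vector $a=(\lambda_{1},\ldots,\lambda_{5})$ by hand. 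You instead work module-theoretically: the relation $(1+\rho+\cdots+\rho^{4})v=0$ together with the fact that $1-\rho$ maps $\Z^{5}$ onto the augmentation kernel provides $x$, the group-ring identity $(1-\rho)(1-\iota)=(1+\rho\iota)(1-\rho)$ forces $(1-\iota)x+w$ into the $\rho$-fixed line, and the augmentation kills the residual constant. Your argument is tidier and visibly coordinate-free, and would adapt more easily to other dihedral-type holonomy; the paper's version has the compensating virtue of producing the conjugating element in closed form.
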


\begin{proof}
Using \repr{toruscryst}, we identify $B_5(M)/\Gamma_2(P_5(M))$ with $\mathbb{Z}^{10g}\rtimes S_5$. As in the proof of \reth{ordk0}(\ref{it:ordk0b}), we decompose the first factor as a direct sum $\mathbb{Z}^{10g}=\mathbb{Z}^5 \oplus\cdots\oplus \mathbb{Z}^5$ of $2g$ copies of $\mathbb{Z}^5$, which we interpret as a $\mathbb{Z}[S_5]$-module, the module structure being given by \repr{action}.

Let $H$ be a group isomorphic to a subgroup of $S_{5}$ that embeds in $(\mathbb{Z}^5 \oplus\cdots\oplus \mathbb{Z}^5)\rtimes S_5\cong  B_5(M)/\Gamma_2(P_5(M))$.   Using the $\mathbb{Z}[S_5]$-module structure of $\mathbb{Z}^{10g}$ given above, it follows that $H$ embeds in $\mathbb{Z}^{5}\rtimes S_5$, for any one of the $2g$ summands of $\mathbb{Z}^{5}$. We will first prove the statement for the embedding of the Frobenius group $\mathbb{Z}_5\rtimes \mathbb{Z}_{2}$ in $\mathbb{Z}^{5}\rtimes S_5$, and then deduce the result in the general case. Let $H$ be this Frobenius group, let $\map{s}{H}[\mathbb{Z}^{5}\rtimes S_5]$ be an embedding, let $\map{\pi}{\mathbb{Z}^{5}\rtimes S_5}[S_{5}]$ be projection onto the first factor, and let $\map{\psi}{S_{5}}[\mathbb{Z}^{5}\rtimes S_5]$ be inclusion into the second factor. Since $\ker{\pi}=\Z^{5}$ is torsion free,  the restriction of $\pi$ to $s(H)$ is injective, and so $\map{\pi\circ s}{H}[S_{5}]$ is an embedding of $H$ into $S_{5}$. Let us prove that the subgroups $s(H)$ and $\psi\circ \pi\circ s(H)$ of $\mathbb{Z}^{5}\rtimes S_5$ are conjugate. First observe that the Frobenius group $\mathbb{Z}_5\rtimes \mathbb{Z}_2$ embeds in $S_{5}$ by sending a generator $\iota_5$ of $ \mathbb{Z}_5$ to the permutation $w_1=(1,2,3,4,5)$ and the generator $\iota_2$ of $\mathbb{Z}_2$ to $w_2=(1,4) (2,3)$. The group $S_{5}$ contains six cyclic subgroups of order $5$ that are  conjugate to $\ang{w_{1}}$, from which we deduce the existence of six pairwise conjugate subgroups of $S_{5}$ isomorphic to $\mathbb{Z}_5\rtimes \mathbb{Z}_2$, each of which contains one of the cyclic subgroups of order $5$. We claim that these are exactly the subgroups of $S_{5}$ isomorphic to $\mathbb{Z}_5\rtimes \mathbb{Z}_2$. To see this, let $K$ be such a subgroup. Then the action of an element $k$ of $K$ of order $2$ on a $5$-cycle $(a_{1},\ldots,a_{5})$ of $K$ inverts the order of the elements $a_{1},\ldots,a_{5}$, and this can only happen if $k$ is a product of two disjoint transpositions. 
Further, there are exactly five products of two disjoint transpositions whose action by conjugation on $(a_{1},\ldots,a_{5})$ inverts the order of the elements $a_{1},\ldots,a_{5}$, and these are precisely the elements of $K$ of order $2$. In particular, each cyclic subgroup of $S_{5}$ of order $5$ is contained in exactly one subgroup of $S_{5}$ isomorphic to $\mathbb{Z}_5\rtimes \mathbb{Z}_2$. This proves the claim. So up to composing $\pi$ by an inner automorphism of $S_{5}$ if necessary, we may assume that $\pi\circ s(H)=\ang{w_1,w_2}$. Applying methods similar to those of the proof of \relem{coeftjr}, the relations $\iota_5^5=1$ and $\iota_2^2=1$ imply that there exist $a_1,\ldots,a_5 ,x, y\in \Z$ such that:
\begin{equation}\label{eq:imsw}
\text{$s(w_1)=(a_1, a_2, a_3, a_4, -a_1-a_2-a_3-a_4)w_1$ and $s(w_2)=(x, y, -y, -x, 0)w_2$.}
\end{equation}
Taking the image of the relation $w_2w_1w_2^{-1}=w_1^{-1}$ by $s$, using~\reqref{imsw} and simplifying the resulting expression, we obtain:
\begin{equation}\label{eq:eqxy}
\text{$x=-a_2-a_3-a_4$ and $y=-a_3$.}
\end{equation}
Any map $\map{s}{H}[\mathbb{Z}^{5}\rtimes S_5]$ of the form~\reqref{imsw} for which the relations~\reqref{eqxy} hold gives rise to an embedding of $H$ in $\mathbb{Z}^{5}\rtimes S_5$. We claim that the image of the embedding is conjugate to the group $\ang{w_{1},w_{2}}$ (viewed as a subgroup of the second factor of $\mathbb{Z}^{5}\rtimes S_5$). To do so, let $\map{s}{H}[\mathbb{Z}^{5}\rtimes S_5]$ of the form~\reqref{imsw} for which the relations~\reqref{eqxy} hold. We will show that there exists $a\in \mathbb{Z}^5$ such that $aw_{i}a^{-1}=s(w_{i})$ for $i=1,2$. Let $\lambda_{5}\in \Z$, and for $i=1,\ldots,4$, let $\lambda_{i}=\lambda_{5}+\sum_{j=1}^{i} a_{i}$, and let $a=(\lambda_1,\lambda_2, \lambda_3, \lambda_4, \lambda_5)\in \mathbb{Z}^5$. Then in $\mathbb{Z}^{5}\rtimes S_5$, using~\reqref{imsw} and~\reqref{eqxy}, we have:
\begin{align*}
aw_{1}a^{-1}&= (\lambda_1,\lambda_2, \lambda_3, \lambda_4, \lambda_5) w_{1}  (-\lambda_1, -\lambda_2, -\lambda_3, -\lambda_4, -\lambda_5)\\
&= (\lambda_1,\lambda_2, \lambda_3, \lambda_4, \lambda_5) \ldotp (-\lambda_5,-\lambda_1, -\lambda_2, -\lambda_3, -\lambda_4)w_{1}\\
&= (\lambda_1-\lambda_5,\lambda_2-\lambda_1, \lambda_3-\lambda_2, \lambda_4-\lambda_3, \lambda_5-\lambda_4)w_{1}\\
&=(a_{1},a_{2},a_{3},a_{4},-a_1-a_2-a_3-a_4) w_{1}=s(w_{1}),
\end{align*}
and
\begin{align*}
aw_{2}a^{-1}&=(\lambda_1,\lambda_2, \lambda_3, \lambda_4, \lambda_5) w_{2}  (-\lambda_1, -\lambda_2, -\lambda_3, -\lambda_4, -\lambda_5)\\
&= (\lambda_1,\lambda_2, \lambda_3, \lambda_4, \lambda_5)  \ldotp (-\lambda_4, -\lambda_3, -\lambda_2, -\lambda_1, -\lambda_5)w_{2}\\
&= (\lambda_1-\lambda_4, \lambda_2-\lambda_3, \lambda_3-\lambda_2, \lambda_4-\lambda_1, 0) w_{2}\\
&= (-a_2-a_3-a_4, -a_{3},a_{3}, a_2+a_3+a_4,0)w_{2}=(x,y,-y,-x,0)w_{2}=s(w_{2}).
\end{align*}
It follows that the subgroups $s(H)$ and $\ang{w_{1},w_{2}}$ are conjugate in $\mathbb{Z}^{5}\rtimes S_5$.

As in the proof of \reth{ordk0}(\ref{it:ordk0b}), returning to the general case where $H$ embeds in $(\mathbb{Z}^5 \oplus\cdots\oplus \mathbb{Z}^5)\rtimes S_5$, the previous paragraph shows that the embedding of $H$ into each $\mathbb{Z}^{5}\rtimes S_5$ is conjugate by an element of the same factor $\mathbb{Z}^{5}$ to the factor $S_{5}$. The result follows by conjugating by the element of $\mathbb{Z}^5 \oplus\cdots\oplus \mathbb{Z}^5$ whose $i$th factor is the conjugating element associated to the $i$th copy of $\mathbb{Z}^{5}\rtimes S_5$  for all $i=1,\ldots,2g$.
\end{proof}

\section{Some Bieberbach subgroups of $B_n(M)/\Gamma_2(P_n(M))$ and K\"ahler flat manifolds}\label{sec:3}

By \reco{subgroup}, for any subgroup $H$ of $S_n$, the quotient group $\sigma^{-1}(H)/\Gamma_2(P_n(M))$ is a crystallographic group that is not Bieberbach since it has torsion elements. We start by proving \reth{bieberbach}, which states that $B_n(M)/\Gamma_2(P_n(M))$ admits Bieberbach subgroups. More precisely, for all $n\geq 2$, we will consider the cyclic subgroup $G_{n}=\ang{(n,n-1,\ldots,2,1)}$ of $S_n$, and we show that $\sigma^{-1}(G_n)/\Gamma_2(P_n(M))$ admits a Bieberbach subgroup $\widetilde{G}_{n,g}$ of dimension $2ng$ whose holonomy group is $G_n$. The group $\widetilde{G}_{n,g}$ is thus the fundamental group of a flat manifold. In \reth{anosov}, we will prove that this flat manifold is orientable and admits a K\"ahler structure as well as Anosov diffeomorphisms.

\begin{proof}[Proof of \reth{bieberbach}]
Let $\alpha_{n-1}=\sigma_1\cdots \sigma_{n-1} \in B_n(M)/\Gamma_2(P_n(M))$. By~\req{sestorus}, $\overline{\sigma}(\alpha_{n-1})$ generates the subgroup $G_n$ of $S_{n}$. Further, the full twist of $B_n(M)$ is a coset representative of $\alpha_{n-1}^n$ by~\reqref{deffulltwist}, hence $\alpha_{n-1}^n=1$ in $B_n(M)/\Gamma_2(P_n(M))$ using \repr{subs}(\ref{subs:5}). By \repr{action}, the action by conjugation of $\alpha_{n-1}$ on the elements of the basis $\{ a_{i,r} \,\mid\, \text{$i=1,\ldots,n$ and $r=1,\ldots, 2g$}\}$ of $P_n(M)/\Gamma_2(P_n(M))$ given by \reco{abel} is as follows:
\begin{equation}\label{eq:actalpha}
\alpha_{n-1}:\,
\text{$a_{1,r}\mapsto a_{2,r}\mapsto \cdots \mapsto a_{n-1,r}\mapsto a_{n,r}\mapsto a_{1,r}$ for all $r=1,\ldots, 2g$.}
\end{equation}
Using~\reqref{actalpha} and the fact that $\alpha_{n-1}^n=1$ in $B_n(M)/\Gamma_2(P_n(M))$, we have: 
\begin{equation}\label{eq:alphan}
(a_{1,1}\alpha_{n-1})^{n} = a_{1,1} a_{2,1} \cdots a_{n,1} ( \alpha_{n-1} )^n = a_{1,1} a_{2,1} \cdots a_{n,1} \Delta_n^2 = \prod_{i=1}^na_{i,1}.
\end{equation}
Let $X=\{ a_{1,1}\alpha_{n-1},\, a_{i,r}^n\,\vert\, \text{$1\leq i\leq n$ and $1\leq r\leq 2g$} \}$, let $Y=\{\, \prod_{i=1}^na_{i,1},\, a_{i,r}^n\,\vert\, \text{$1\leq i\leq n$ and $1\leq r\leq 2g$} \}$, and let $\widetilde{G}_{n,g}$ (resp.\ $L$) be the subgroup of $\sigma^{-1}(G_n)/\Gamma_2(P_n(M))$ generated by $X$ (resp.\ $Y$). Then the restriction $\map{\overline{\sigma}\bigl\lvert_{\widetilde{G}_{n,g}}\bigr.}{\widetilde{G}_{n,g}}[G_{n}]$ is surjective, and using~\reqref{alphan}, we see that $L$ is a subgroup of $\widetilde{G}_{n,g}$. We claim that $L=\kernb\bigl(\overline{\sigma}\bigl\lvert_{\widetilde{G}_{n,g}}\bigr.\bigr)$. Clearly, $L \subset\kernb\bigl(\overline{\sigma}\bigl\lvert_{\widetilde{G}_{n,g}}\bigr.\bigr)$. Conversely, let $w\in \kernb\bigl(\overline{\sigma}\bigl\lvert_{\widetilde{G}_{n,g}}\bigr.\bigr)$. Writing $w$ in terms of the generating set $X$ of $\widetilde{G}_{n,g}$ and using~\reqref{actalpha} and \reco{abel}, it follows that $\displaystyle w=(a_{1,1}\alpha_{n-1})^{m} \prod_{\substack{1\leq i\leq n\\ 1\leq r\leq 2g}} a_{i,r}^{n\delta_{i,r}}$, where $m\in \Z$ and $\delta_{i,r}\in \Z$ for all $1\leq i\leq n$ and $1\leq r\leq 2g$. The fact that $w\in \kernb\bigl(\overline{\sigma}\bigl\lvert_{\widetilde{G}_{n,g}}\bigr.\bigr)$ implies that $n$ divides $m$, and so $\displaystyle w=((a_{1,1}\alpha_{n-1})^{n})^{m/n} \prod_{\substack{1\leq i\leq n\\ 1\leq r\leq 2g}} a_{i,r}^{n\delta_{i,r}} \in L$, which proves the claim. Thus the following extension:
\begin{equation}\label{eq:sesGng}
1\to L \to \widetilde{G}_{n,g} \stackrel{\overline{\sigma}\bigl\lvert_{\widetilde{G}_{n,g}}\bigr.}{\to} G_{n} \to 1,
\end{equation}
is short exact. Now $L$ is also a subgroup of $P_{n}(M)/\Gamma_{2}(P_{n}(M))$, which is free Abelian of rank $2ng$ by \reco{abel}. Since $\{ a_{i,r} \,\mid\, \text{$i=1,\ldots,n$ and $r=1,\ldots, 2g$}\}$ is a basis of $P_{n}(M)/\Gamma_{2}(P_{n}(M))$, it follows from analysing $Y$ that $Y'=\{\, \prod_{i=1}^na_{i,1},\, a_{i,1}^n, a_{j,r}^n\,\vert\, \text{$2\leq i\leq n$, $1\leq j\leq n$ and $2\leq r\leq 2g$} \}$ is a basis of $L$. In particular $L$ is free Abelian of rank $2ng$. In terms of the basis $Y'$, the holonomy representation $\map{\rho}{G_n}[\operatorname{\text{Aut}}(L)]$ associated with the short exact sequence~\reqref{sesGng} is given by the block diagonal matrix:
\begin{equation}\label{eq:actmatrix}
\rho((1,n,n-1,\ldots,2))=\left( \begin{smallmatrix}
M_{1} & & &\\
& M_{2} & &\\
& & \ddots &\\
& & & M_{2g}
\end{smallmatrix}
\right),
\end{equation}
where $M_{1},\ldots,M_{2g}$ are the $n$-by-$n$ matrices satisfying:
\begin{equation*}
\text{$M_{1}=\left(
\begin{smallmatrix}
1 & 0 & 0 & \cdots & 0 & 0 & n\\
0 & 0 & 0 & \cdots & 0 & 0 & -1\\
0 & 1 & 0 & \cdots & 0 & 0 & -1\\
0 & 0 & 1 & \cdots & 0 & 0 & -1\\
\vdots & \vdots & \vdots & \ddots & \vdots & \vdots & \vdots\\
0 & 0 & 0 & \cdots & 1 & 0 & -1\\
0 & 0 & 0 & \cdots & 0 & 1 & -1
\end{smallmatrix}\right)$ and $M_{2}=\cdots=M_{2g}=
\left(
\begin{smallmatrix}
0 & 0 & 0 & \cdots & 0 & 0 & 1\\
1 & 0 & 0 & \cdots & 0 & 0 & 0\\
0 & 1 & 0 & \cdots & 0 & 0 & 0\\
0 & 0 & 1 & \cdots & 0 & 0 & 0\\
\vdots & \vdots & \vdots & \ddots & \vdots & \vdots & \vdots\\
0 & 0 & 0 & \cdots & 1 & 0 & 0\\
0 & 0 & 0 & \cdots & 0 & 1 & 0
\end{smallmatrix}
\right)$,}
\end{equation*}
where we have used the relation $a_{1,1}^n= (\prod_{i=1}^na_{i,1})^n\cdot \prod_{i=2}^n(a_{i,1}^n)^{-1}$. It follows from this description that $\rho$ is injective, and from \relem{cryst} and~\reqref{sesGng} that $\widetilde{G}_{n,g}$ is a crystallographic group of dimension $2ng$ and whose holonomy group is $\mathbb{Z}_n$. 

Now we prove that $\widetilde{G}_{n,g}$ is torsion free. Let $\omega\in \widetilde{G}_{n,g}$ be an element of finite order. Since $L$ is torsion free, the order of $\omega$ is equal to that of $\overline{\sigma}(\omega)$ in the cyclic group $G_{n}$, in particular $\omega^n=1$. Using~\reqref{actalpha} and~\reqref{alphan}, as well as the fact that $L$ is torsion free, there exist $\theta\in L$ and $j\in \{ 0,1,2,\ldots,n-1 \}$ such that $\omega=\theta(a_{1,1}\alpha_{n-1})^j$. Making use of the basis $Y'$ of $L$, 
\begin{equation}\label{eq:thetadev}
\theta=\biggl(\prod_{i=1}^na_{i,1}\biggr)^{\lambda_{1,1}}\ldotp \prod_{i=2}^na_{i,1}^{n\lambda_{i,1}} \ldotp \prod^{2g}_{r=2}\prod_{i=1}^na_{i,r}^{n\lambda_{i,r}},
\end{equation}
where $\lambda_{i,r}\in \mathbb{Z}$ for all $i=1,\ldots,n$ and $r=1,\ldots,2g$. On the other hand:
\begin{equation}\label{eq:omegan}
1=\omega^{n}= \biggl(\,\prod_{k=0}^{n-1} (a_{1,1}\alpha_{n-1})^{jk} \theta (a_{1,1}\alpha_{n-1})^{-jk}\biggr) (a_{1,1}\alpha_{n-1})^{nj}.
\end{equation}
By~\reqref{alphan}, $(a_{1,1}\alpha_{n-1})^{nj}=\prod_{i=1}^na_{i,1}^{j}$, and thus the right-hand side of~\reqref{omegan} belongs to $P_n(M)/\Gamma_2(P_n(M))$. We now compute the coefficient of $a_{1,1}$ in~\reqref{omegan} considered as one of the elements of the basis of $P_n(M)/\Gamma_2(P_n(M))$ given by \reco{abel}. From~\reqref{actalpha}, the terms appearing in the product $\prod^{2g}_{r=2}\prod_{i=1}^na_{i,r}^{n\lambda_{i,r}}$ of~\reqref{thetadev} do not contribute to this coefficient. Since conjugation by $a_{1,1}\alpha_{n-1}$ permutes cyclically the elements $a_{1,1},a_{2,1},\ldots,a_{n,1}$ by~\reqref{actalpha}, it follows that conjugation by $a_{1,1}\alpha_{n-1}$ leaves the first term $\prod_{i=1}^na_{i,1}^{\lambda_{1,1}}$ of~\reqref{thetadev} invariant, and so with respect to~\reqref{omegan}, it contributes $n\lambda_{1,1}$ to the coefficient of $a_{1,1}$. In a similar manner, with respect to~\reqref{omegan}, the second term of~\reqref{thetadev} contributes $n(\lambda_{2,1}+\cdots+\lambda_{n,1})$ to the coefficient of $a_{1,1}$. Putting together all of this information, the computation of the coefficient of $a_{1,1}$ in~\reqref{omegan} yields the relation $n(\lambda_{1,1}+\lambda_{2,1}+\cdots+\lambda_{n,1}) + j = 0$. We conclude that $j=0$, so $\omega=\theta=1$ because $L$ is torsion free, which using the first part of the statement, shows that $\widetilde{G}_{n,g}$ is a Bieberbach group. 

To prove the last part of the statement, using~\cite[Lemma~5.2(3)]{Sz} and the fact that $G_{n}$ is cyclic,
the centre $Z(\widetilde{G}_{n,g})$ of $\widetilde{G}_{n,g}$ is given by:
\begin{align}
Z(\widetilde{G}_{n,g}) &=\{ \theta\in L \,\vert\, \text{$\rho(g)(\theta)=\theta$ for all $g\in G_{n}$}\}= \{ \theta\in L \,\vert\, \text{$\rho((1,n,\ldots, 2))(\theta)=\theta$}\}.\label{eq:zGng}
\end{align}
To compute $Z(\widetilde{G}_{n,g})$, let $\theta\in L$. Writing $\theta$ with respect to the basis $Y'$ of $L$ as a vector $\left( \begin{smallmatrix}
\beta_{1}\\
\vdots\\
\beta_{2g}
\end{smallmatrix}
\right)$, where for all $i=1,\ldots,2g$, $\beta_{i}$ is a column vector with $n$ elements, and using the description of the action $\rho$ given by~\reqref{actmatrix}, it follows that $\theta\in Z(\widetilde{G}_{n,g})$ if and only if $\beta_{i}$ belongs to the eigenspace of $M_{i}$ with respect to the eigenvalue $1$ for all $i=1,\ldots,2g$. It is straightforward to see that these eigenspaces are of dimension $1$, and are generated by $\left( \begin{smallmatrix}
1\\
0\\
\vdots\\
0
\end{smallmatrix}
\right)$ if $i=1$ and by $\left( \begin{smallmatrix}
1\\
\vdots\\
1
\end{smallmatrix}
\right)$ if $i=2,\ldots,2g$. We conclude using~\reqref{zGng} that $Z(\widetilde{G}_{n,g})$ is the free Abelian group generated by $\{ \prod_{i=1}^na_{i,1}, \prod_{i=1}^na_{i,r}^n \,\vert\, 2\leq r\leq 2g\}$. This generating set may be seen to be a basis of $Z(\widetilde{G}_{n,g})$, in particular, $Z(\widetilde{G}_{n,g})$ is free Abelian of rank $2g$.
\end{proof}

We do not know whether $B_n(M)/\Gamma_2(P_n(M))$ admits a Bieberbach
subgroup of maximal rank whose holonomy group is non Abelian. The
following proposition shows that a certain Frobenius group cannot be the holonomy of any Bieberbach subgroup of $B_n(M)/\Gamma_2(P_n(M))$.

\begin{prop}\label{prop:frob}
Let $p$ be an odd prime, and let $M$ be a compact, orientable surface without boundary of genus $g\geq 1$. In $B_p(M)/\Gamma_{2}(P_p(M))$ there is no Bieberbach subgroup $H$ such that $\overline{\sigma}(H)$ is isomorphic to the Frobenius group $\mathbb{Z}_p\rtimes_{\theta}\mathbb{Z}_{(p-1)/2}$, where the automorphism $\theta(\iota_{(p-1)/2})$ is of order $(p-1)/2$, $\iota_{(p-1)/2}$ being a generator of $\mathbb{Z}_{(p-1)/2}$.
\end{prop}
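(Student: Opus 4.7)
The plan is to argue by contradiction. Assume that a Bieberbach subgroup $H\leq B_p(M)/\Gamma_2(P_p(M))$ with $\overline{\sigma}(H)\cong \mathbb{Z}_p\rtimes_{\theta}\mathbb{Z}_{(p-1)/2}$ exists, and set $K = H\cap (P_p(M)/\Gamma_2(P_p(M)))$. Using \repr{toruscryst}, identify $B_p(M)/\Gamma_2(P_p(M))$ with $\mathbb{Z}^{2pg}\rtimes_{\varphi}S_p$, and after a suitable conjugation in $S_p$ arrange that $\overline{\sigma}(H)=\langle w_1,w_2\rangle$, where $w_1=(1,2,\ldots,p)$, $w_2\in S_p$ has order $(p-1)/2$, and $w_2 w_1 w_2^{-1}=w_1^k$ for an integer $k$ whose class in $(\mathbb{Z}/p\mathbb{Z})^{\ast}$ has order $(p-1)/2$; in particular $k\not\equiv 1\pmod{p}$. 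Choose lifts $\tilde{w}_1,\tilde{w}_2\in H$ of $w_1,w_2$; torsion-freeness of $H$ forces $\tilde{w}_1^p\neq 1$.

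The central observation concerns $\tilde{w}_1^p\in K$. Since $\tilde{w}_1^p$ commutes with $\tilde{w}_1$, it lies in the $w_1$-fixed subgroup of $\mathbb{Z}^{2pg}$, which by \reco{abel} and \repr{action} is the diagonal
\[
D \;=\; \bigoplus_{r=1}^{2g} \mathbb{Z}\cdot(a_{1,r}+a_{2,r}+\cdots+a_{p,r}) \;\cong\; \mathbb{Z}^{2g};
\]
moreover $w_2$ fixes $D$ pointwise, since any permutation of the coordinates of a block fixes the constant vector of that block. Set $L=K\cap D$ and $M=NK$, where $N=1+w_1+\cdots+w_1^{p-1}$. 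Since $N$ acts as multiplication by $p$ on $D$, one has $pL\subseteq M\subseteq L$, so the quotient $L/M$ is annihilated by $p$. The element $\beta = \tilde{w}_2\tilde{w}_1\tilde{w}_2^{-1}\tilde{w}_1^{-k}$ is the ratio of two lifts of $w_1^k$ and hence belongs to $K$. Raising the relation $\tilde{w}_2\tilde{w}_1\tilde{w}_2^{-1}=\tilde{w}_1^k\beta$ to the $p$th power, and using that $\tilde{w}_2\tilde{w}_1^p\tilde{w}_2^{-1}=\tilde{w}_1^p$ together with an expansion in the abelian group $K$ of the same flavour as \relem{coeftjr}, we obtain the key identity
\[
(1-k)\,\tilde{w}_1^p \;=\; N\beta \;\in\; M.
\]

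From this the contradiction is immediate: the class of $\tilde{w}_1^p$ in $L/M$ is annihilated by both $p$ and $1-k$, so by B\'ezout it vanishes (since $\gcd(p,1-k)=1$), and there exists $k_1\in K$ with $\tilde{w}_1^p=Nk_1$. Writing $\tilde{w}_1=(\eta,w_1)$ in $\mathbb{Z}^{2pg}\rtimes S_p$, the modified lift $\tilde{w}_1' = \tilde{w}_1\cdot k_1^{-1}\in H$ satisfies $(\tilde{w}_1')^p = N(\eta - w_1 k_1) = N\eta - Nk_1 = \tilde{w}_1^p - Nk_1 = 0$, so $\tilde{w}_1'$ is a nontrivial element of $H$ of order $p$, contradicting the Bieberbach hypothesis.

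The main hurdle is finding the right relation to exploit: one must raise the Frobenius conjugation relation to the $p$th power (so that both sides live in $K$ and the action of $w_2$ on $\tilde{w}_1^p$ becomes trivial), and then recognise that the pair $(p,1-k)$ of annihilators of $[\tilde{w}_1^p]$ in the $p$-torsion quotient $L/M$ is coprime. Everything else is routine: the $p$th power expansion $(\tilde{w}_1^k\beta)^p = k\tilde{w}_1^p+N\beta$ is a direct computation of \relem{coeftjr} type, and the identity $N|_D = p\cdot\mathrm{id}_D$ is elementary.
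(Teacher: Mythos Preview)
Your argument is correct. Both your proof and the paper's rest on the same core computation: lifting the Frobenius relation $w_2 w_1 w_2^{-1} = w_1^{k}$ to $H$ and taking $p$th powers yields, in your language, $(1-k)\,\tilde{w}_1^{\,p} = N\beta$; in the paper's augmentation notation this reads $\epsilon(\beta_i) = (1-l)\,\epsilon((a_1)_i)$ on each $\mathbb{Z}^{p}$-block. From here the two proofs diverge in how they produce the torsion element. The paper observes directly that the element $v = (v_2 v_1 v_2^{-1} v_1^{-l})\cdot v_1^{l-1}$ (which simplifies to the commutator $[v_2,v_1]$) has abelian part with augmentation zero, the contributions $(1-l)\epsilon((a_1)_i)$ and $(l-1)\epsilon((a_1)_i)$ cancelling; hence $v^{p}=1$ by \relem{coeftjr}, a one-step construction. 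You instead pass to the quotient $L/M$ with $M=NK$, use B\'ezout on the coprime pair $(p,\,1-k)$ to force $\tilde{w}_1^{\,p}\in NK$, and then correct the lift $\tilde{w}_1$ by an element of $K$ to kill its $p$th power. Your route is a touch more module-theoretic (working with the norm $N$ and a $p$-torsion quotient) and yields a torsion element projecting to $w_1$ rather than to $w_1^{l-1}$; the paper's is marginally more direct, since the explicit element $\beta\cdot v_1^{l-1}$ avoids the B\'ezout step. The two arguments are of comparable length and difficulty.
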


\begin{proof}
Let $H$ be a subgroup of $B_p(M)/\Gamma_{2}(P_p(M))$ such that $\overline{\sigma}(H)$ is isomorphic to the Frobenius group $\mathbb{Z}_p\rtimes_{\theta}\mathbb{Z}_{(p-1)/2}$. Let us show that $H$ has non-trivial elements of finite order. Using \repr{toruscryst}, we also identify $B_p(M)/\Gamma_{2}(P_p(M))$ with $\Z^{2gp}\rtimes S_{p}$. Each element $B_p(M)/\Gamma_{2}(P_p(M))$ may thus be written as $x\ldotp \psi(w)$ where $x\in  P_p(M)/\Gamma_{2}(P_p(M))$ and $w\in S_p$, which we refer to as its normal form. As in the proofs of \reth{ordk0}(\ref{it:ordk0b}) and~\repr{embedd}, $P_p(M)/\Gamma_{2}(P_p(M))$ splits as a direct sum $\oplus_1^{2g}\mathbb{Z}^p$ that we interpret as a $\mathbb{Z}[S_p]$-module, the module structure being given by \repr{action}. If $z\in P_p(M)/\Gamma_{2}(P_p(M))$ then for $j=1,\ldots,2g$, let $z_j$ denote its projection onto the $j$th summand of $\oplus_1^{2g}\mathbb{Z}^p$, and 
for $(z,\tau)\in \mathbb{Z}^{2gp}\rtimes S_p$, let $(z,\tau)_j=(z_j,\tau)\in \mathbb{Z}^p\rtimes S_p$. Let $\map{\epsilon}{\mathbb{Z}^{p}}[\mathbb{Z}]$ denote the augmentation homomorphism. We extend $\epsilon$ to a map from $\mathbb{Z}^p\rtimes S_p$ to $\Z$, also denoted by $\epsilon$, by setting $\epsilon(z,\tau)=\epsilon(z)$ for all $(z,\tau)\in \mathbb{Z}^p\rtimes S_p$. Using the $\mathbb{Z}[S_p]$-module structure, observe that: 
\begin{equation}\label{eq:epsz}
\text{$\epsilon((\lambda z\lambda^{-1})_j)=\epsilon(z_j)$ for all $\lambda\in  B_p(M)/\Gamma_{2}(P_p(M))$ and $z\in P_p(M)/\Gamma_{2}(P_p(M))$.}
\end{equation}
Hence for all $(z,\tau),(z',\tau') \in \mathbb{Z}^p\rtimes S_p$:
\begin{equation*}
\epsilon(z\tau \ldotp z' \tau')= \epsilon(z\tau z' \tau^{-1}\ldotp \tau \tau')=  \epsilon(z\ldotp \tau  z' \tau^{-1})=\epsilon(z) \ldotp \epsilon(\tau  z' \tau^{-1})= \epsilon(z) +\epsilon(z')=\epsilon(z,\tau)+ \epsilon(z',\tau'),
\end{equation*}
and thus $\map{\epsilon}{\mathbb{Z}^p\rtimes S_p}[\Z]$ is a homomorphism. Identifying $\overline{\sigma}(H)$ with the Frobenius group $\mathbb{Z}_p\rtimes_{\theta}\mathbb{Z}_{(p-1)/2}$, let $w_1, w_2\in S_p$ be generators of $\mathbb{Z}_p$ and $\mathbb{Z}_{(p-1)/2}$ respectively. For $i=1,2$, let $v_{i}\in H$ be such that $\overline{\sigma}(v_{i})=w_{i}$, and let $a_{i}\in P_p(M)/\Gamma_{2}(P_p(M))$ be such that $v_{i}=a_{i} \psi(w_{i})$, where $\map{\psi}{S_p}[B_p(M)/\Gamma_2(P_p(M))]$ is the section for $\overline{\sigma}$ given in the proof of \repr{toruscryst}. Using the relation $w_2w_1w_2^{-1}=w_1^l$ in the Frobenius group, where $l$ is an element of the multiplicative group $\mathbb{Z}_p^{\ast}$ of order $(p-1)/2$, we have $v_2v_1v_2^{-1}v_1^{-l}\in  H\cap P_p(M)/\Gamma_{2}(P_p(M))$. Further:
\begin{align*}
v_2v_1v_2^{-1}v_1^{-l} =& a_{2} \psi(w_{2}) a_{1} \psi(w_{1}) \psi(w_{2})^{-1}a_{2}^{-1} (a_{1} \psi(w_{1}))^{-l}\\
=& a_{2} \psi(w_{2}) a_{1} \psi(w_{2})^{-1}a_{2}^{-1} \ldotp a_{2} \ldotp \psi(w_2w_1w_2^{-1}) a_{2}^{-1} \psi(w_2w_1w_2^{-1})^{-1}\ldotp \psi(w_2w_1w_2^{-1}w_1^{-l})\ldotp\\
& \prod_{k=1}^{l}(\psi(w_{1})^{l-k} a_{1}^{-1} \psi(w_{1})^{k-l}),
\end{align*}
and applying~\reqref{epsz} and using the relation $w_2w_1w_2^{-1}=w_1^l$, it follows that: 
\begin{equation}\label{eq:epsv1v2}
\epsilon((v_2v_1v_2^{-1}v_1^{-l})_i)= (1-l) \epsilon((a_{1})_{i})
\end{equation}
for all $1\leq i\leq 2g$. Let $v=v_2v_1v_2^{-1}v_1^{-l}v_1^{l-1}$. The element $v_1^{l-1}$ also belongs to $H$, so $v \in H$, and since $v_1^{l-1}=(a_{1} \psi(w_{1}))^{l-1}=\bigl(\prod_{k=0}^{l-2}(\psi(w_{1})^{k} a_{1} \psi(w_{1})^{-k})\bigr) \psi(w_{1})^{l-1}$, for all $1\leq i\leq 2g$, it follows that $(v)_{i}=\beta_{i} \psi(w_{1})^{l-1}$, where $\beta_{i}\in \Z^{p}$ is given by: 
\begin{equation}\label{eq:beta}
\beta_{i}= (v_2v_1v_2^{-1}v_1^{-l})_i \ldotp \biggl(\prod_{k=0}^{l-2}(\psi(w_{1})^{k} a_{1} \psi(w_{1})^{-k})\biggr)_{i}.
\end{equation}
Using~\reqref{epsz},~\reqref{epsv1v2} and~\reqref{beta}, we see that:
\begin{equation}\label{eq:epsbeta}
\epsilon(\beta_{i})= \epsilon((v_2v_1v_2^{-1}v_1^{-l})_i)+\epsilon \biggl(\biggl( \prod_{k=0}^{l-2}(\psi(w_{1})^{k} a_{1} \psi(w_{1})^{-k})\biggr)_{i}\biggr)=(1-l) \epsilon((a_{1})_{i})+ (l-1) \epsilon((a_{1})_{i})=0
\end{equation}
for all $1\leq i\leq 2g$. Now in normal form, $v$ may be written $v=(\beta_{1},\ldots, \beta_{2g}) \psi(w_{1})^{l-1}$. Since $w_1^{l-1}$ is non trivial, it follows that $v$ is non trivial. Taking $z=v$ and $k=p$ in \relem{coeftjr} and using~\reqref{epsbeta}  it follows that $v$ is of order $p$, and hence $H$ has non-trivial torsion elements. In particular, $H$ is not a Bieberbach group.
\end{proof}

It seems to be an interesting question to classify the subgroups of $S_n$ which can be the holonomy of a Bieberbach subgroup of $B_n(M)/\Gamma_2(P_n(M))$  of maximal rank. In the case where the subgroup is a semi-direct product, the argument given in the proof of \repr{frob} may be helpful in the study of  the problem.

Using the holonomy representation of the Bieberbach group $\widetilde{G}_{n,g}$ of \reth{bieberbach}, given in \req{actmatrix}, we now prove some dynamical and geometric properties of the flat manifold ${\mathcal X}_{n,g}$ whose fundamental group is $\widetilde{G}_{n,g}$. 

\begin{proof}[Proof of \reth{anosov}]
Let $n\geq 2$, let $g\geq 1$, let ${\mathcal X}_{n,g}$ be a Riemannian
compact flat manifold ${\mathcal X}_{n,g}$ whose fundamental group is $\widetilde{G}_{n,g}$, the Bieberbach group given in the statement of \reth{bieberbach}, and let $G_n$ be the cyclic group of that theorem. Let $1$ denote the generator $(1,n,n-1,\ldots,2)$ of $G_n$, and consider the holonomy representation $\map{\rho}{\mathbb{Z}_n} [\operatorname{\text{Aut}}(\mathbb{Z}^{2ng})]$ of $\widetilde{G}_{n,g}$ given in the proof of \reth{bieberbach}. By~\reqref{actmatrix}, if the characteristic polynomial of $\rho(1)$ is equal to $(x^n-1)^{2g}$, where $\map{\rho}{\mathbb{Z}_n} [\operatorname{\text{Aut}}(\mathbb{Z}^{2ng})]$ is the holonomy representation of $\widetilde{G}_{n,g}$. To see this, if $2\leq i\leq 2g$ then $M_{i}$ is the companion matrix of the polynomial $x^n-1$, and if we remove the first row and column of $M_{1}$, we obtain the companion matrix of the polynomial $1+x+x^2+\cdots +x^{n-1}$, so the characteristic polynomial of $M_{1}$ is also equal to $(x-1)(1+x+x^2+\cdots +x^{n-1})=x^n-1$. In particular, $\det(\rho(1))=1$, from which it follows from the end of~Section~\ref{sec:cryst} that ${\mathcal X}_{n,g}$ is orientable.
Further, the eigenvalues of $\rho(1)$ are the $n$th roots of unity each with multiplicity $2g$, and we conclude from \cite[Theorem~7.1]{Po} that ${\mathcal X}_{n,g}$ admits Anosov diffeomorphisms. By \cite[Theorems~6.4.12 and~6.4.13]{Dekimpe}, the first Betti number of $\mathcal{X}_{n,g}$ is given by:
\begin{align*}
 \beta_{1}({\mathcal X}_{n,g})&=2ng - \operatorname{\text{rank}}(\rho(1)- I_{2ng})= 2ng - 2g(n-1)= 2g. 
\end{align*}

It remains to show that ${\mathcal X}_{n,g}$ admits a K\"ahler structure. In order to do this, we make use of the following result from~\cite[Theorem~3.1 and Proposition~3.2]{JR} (see also \cite[Theorem~1.1 and Proposition~1.2]{DHS}) that a Bieberbach group $\Gamma$ of dimension $m$ is the fundamental group of a K\"ahler flat manifold with holonomy group $H$ if and only if $m$ is even, and each $\R$-irreducible summand of the holonomy representation $\map{\psi}{H}[\operatorname{\text{GL}}(m,\R)]$ of $\Gamma$, which is also $\mathbb{C}$-irreducible, occurs with even multiplicity. Since $\dim({\mathcal X}_{n,g})=2ng$ and the character vector of the representation $\rho$ is equal to $\left(\begin{smallmatrix}
2ng \\
0\\
\vdots\\
0
\end{smallmatrix}\right)$, it follows that each real irreducible representation of $\rho$ appears $2g$ times in its decomposition, and hence that ${\mathcal X}_{n,g}$ admits a K\"ahler structure.
\end{proof}

The Betti numbers of the K\"ahler manifold ${\mathcal X}_{n,g}$ may be computed using the formula $\beta_i({\mathcal X}_{n,g})=\textrm{dim}(\Lambda^i(\R^{2n}))^{G_{n}}$ of~\cite[Page~370]{DHS}. For real dimensions $4$ and $6$, we may identify the fundamental group of ${\mathcal X}_{n,g}$ in the CARAT classification~\cite{Carat} and~\cite[Tables, pp.~368 and~370]{DHS}. More precisely, the
CARAT symbol of the $4$-dimensional Bieberbach group $\widetilde{G}_{2,1}$ (resp.\ the $6$-dimensional Bieberbach group $\widetilde{G}_{3,1}$) is 18.1 (resp.\ 291.1).

\section{The cases of the sphere and non-orientable surfaces without boundary}\label{sec:4}

Let $M$ be a compact, connected surface without boundary. In this section, we describe the quotient group $B_n(M)/\Gamma_2(P_n(M))$ for the cases not covered by \repr{toruscryst}, namely $M$ is either the $2$-sphere $\St$ or a compact, non-orientable surface of genus $g\geq 1$ without boundary, which we denote by $N_{g}$. If $g=1$ then $N_{1}$ is the projective plane $\rp$. 

\begin{lem}\label{lem:torsion3}
Let  $M$ be either the $2$-sphere $\St$ or a compact, non-orientable surface of genus $g\geq 1$ without boundary, and let $n\in \N$. The group $P_n(M)/\Gamma_2(P_n(M))$ is isomorphic to: 
\begin{enumerate}[(a)]
\item\label{it:torsion3a} $\mathbb{Z}_2\oplus \mathbb{Z}^{n(n-3)/2}$ if $M=\St$ and $n\geq 3$. Further, the $\Gamma_2(P_n(M))$-coset of the full twist $\Delta_{n}^{2}$ is the generator of the summand $\mathbb{Z}_2$.
\item\label{it:torsion3b} $\mathbb{Z}_2^n \oplus\mathbb{Z}^{(g-1)n}$  if $M=N_g$.
\end{enumerate}
\end{lem}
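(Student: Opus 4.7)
The plan is to work from a presentation of $P_n(M)$, e.g.\ the one in~\cite{GM}, to abelianise, and to analyse the resulting finitely-presented Abelian group in each case.

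For part~(\ref{it:torsion3a}), $P_n(\St)$ is presented by the Artin generators $A_{i,j}$, $1\leq i<j\leq n$, subject to the pure-braid Artin-type relations together with $n$ \emph{sphere relations}, one per strand, whose underlying word for strand $i$ involves exactly the $A_{p,q}$ with $\{p,q\}\ni i$, each appearing once. After abelianising, the pure-braid relations become trivial; writing $a_{\{i,j\}}$ for the image of $A_{i,j}$ indexed by the unordered pair $\{i,j\}$, each sphere relation collapses to $R_{i}\colon \sum_{k\neq i}a_{\{i,k\}}=0$. Let $\map{\phi}{\mathbb{Z}^{n}}[\mathbb{Z}^{\binom{n}{2}}]$ be the map defined by these $n$ relations, which is (up to sign) the vertex-edge incidence map of $K_{n}$. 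The coefficient of $a_{\{i,j\}}$ in $\sum_{i}c_{i}R_{i}$ is $c_{i}+c_{j}$, and these vanishing for all $i\neq j$ force all $c_{i}=0$ when $n\geq 3$, so $\phi$ has rank $n$ over $\Q$. On the other hand $\sum_{i}R_{i}=2\sum_{i<j}a_{\{i,j\}}$, so $\phi$ has rank $n-1$ modulo~$2$. Consequently $\operatorname{coker}(\phi)$ has free rank $\binom{n}{2}-n=n(n-3)/2$ and $2$-primary part of $\F[2]$-dimension exactly one, with no other torsion, so it is isomorphic to $\mathbb{Z}^{n(n-3)/2}\oplus \mathbb{Z}_{2}$. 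By~\reqref{fulltwist} the abelianised full twist equals $\sum_{i<j}a_{\{i,j\}}$, which has order dividing~$2$ by the identity above, and it does not lie in $\operatorname{Im}(\phi)$, since a preimage would yield $c\in \mathbb{Z}^{n}$ with $c_{i}+c_{j}=1$ for all $i\neq j$, which has no integer solution. Hence $\Delta_{n}^{2}$ generates the $\mathbb{Z}_{2}$-summand.

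For part~(\ref{it:torsion3b}), I would work from the presentation of $P_n(N_g)$ in~\cite{GM}, whose generators are the $A_{i,j}$ together with surface generators $a_{i,r}$, $1\leq i\leq n$ and $1\leq r\leq g$. The key preliminary step is the $N_g$-analogue of \repr{subs}(\ref{subs:4}) and~(\ref{subs:5}): all $T_{i,j}$, and hence all $A_{i,j}$, lie in $\Gamma_2(P_n(N_g))$. The commutator formula expressing $T_{i,j}$ as $[a_{i,1}\cdots a_{i,r},\widetilde{A}_{j,r}]\,T_{i,j-1}$ survives in the non-orientable setting (possibly up to signs on the exponents of the $a_{i,r}$, which are inconsequential in $\Gamma_2$), so the induction on $j-i$ of \repr{subs} carries over. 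Once the $A_{i,j}$ and $T_{i,j}$ are killed in the abelianisation, the non-orientable surface relation for strand $i$, which in $P_n(N_g)$ reads $a_{i,1}^{2}\cdots a_{i,g}^{2}=(\text{word in the }T_{j,k})$, collapses to $2(a_{i,1}+\cdots+a_{i,g})=0$. The abelianisation then decomposes as the direct sum, over $i=1,\ldots,n$, of $\mathbb{Z}\ang{a_{i,1},\ldots,a_{i,g}}/\ang{2(a_{i,1}+\cdots+a_{i,g})}$; taking $a_{i,1}+\cdots+a_{i,g}$ as a basis vector in each summand identifies the latter with $\mathbb{Z}_{2}\oplus\mathbb{Z}^{g-1}$, yielding $\mathbb{Z}_{2}^{n}\oplus\mathbb{Z}^{(g-1)n}$.

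The main obstacle is the careful bookkeeping involved in extracting the precise form of the surface relations from~\cite{GM} in each case and, especially for part~(\ref{it:torsion3b}), verifying the $N_g$-analogue of \repr{subs} in order to conclude that every $T_{i,j}$ lies in $\Gamma_2(P_n(N_g))$. Once this structural input is available, both parts reduce to elementary linear-algebraic computations as sketched above.
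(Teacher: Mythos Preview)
For part~(\ref{it:torsion3b}) your approach coincides with the paper's: it too works from the presentation in~\cite{GM}, uses relation~(Pr3) of~\cite[Theorem~5.1]{GM} and the induction $T_{i,j}=T_{i,j-1}$ to kill all $T_{i,j}$ in the abelianisation, and reads off the quotient from the surface relations $a_{j,1}^2\cdots a_{j,g}^2=1$. (The paper treats $g=1$ separately by citing~\cite{GG}, but your uniform argument covers that case as well.)

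For part~(\ref{it:torsion3a}) the paper simply cites~\cite{GGzeit}, so your direct computation via the vertex--edge incidence map of $K_n$ is a genuinely different, self-contained route. There is, however, a gap. Your rank computations over $\mathbb{Q}$ and over $\F[2]$ (together with the odd-prime ranks, which follow from the same kernel argument but which you do not explicitly check) show only that the elementary divisors of $\phi$ are $1,\ldots,1,2^k$ for some $k\geq 1$; they do \emph{not} force $k=1$, and your later observation that $\sum_{i<j} a_{\{i,j\}}$ has order exactly~$2$ in the cokernel does not help either, since every nontrivial cyclic $2$-group contains a unique element of order~$2$. One clean fix: the relations $R_1,\ldots,R_{n-1}$ already span a direct summand of $\mathbb{Z}^{\binom{n}{2}}$ (project onto the coordinates $a_{\{1,n\}},\ldots,a_{\{n-1,n\}}$ to see this), so the first $n-1$ elementary divisors equal~$1$; since $R_1+\cdots+R_n=2v$ with $v=\sum_{i<j} a_{\{i,j\}}$, it then suffices to check that the image of $v$ in the free quotient $\mathbb{Z}^{\binom{n}{2}}/\langle R_1,\ldots,R_{n-1}\rangle$ is primitive, which forces the last elementary divisor to be exactly~$2$. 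Alternatively, exhibit a single $n\times n$ minor of the incidence matrix with determinant~$\pm 2$, for instance the columns indexed by the edges of a triangle together with a spanning path on the remaining vertices.
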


\begin{proof} 
Part~(\ref{it:torsion3a}) follows from~\cite[page~674]{GGzeit}. For part~(\ref{it:torsion3b}), if $g=1$ then $M=\rp$, and the result is a consequence of~\cite[Proposition~8]{GG}. So suppose that $g\geq 2$. We make use of~\cite[Theorem~5.1, Presentation~3]{GM}. Since $T_{i,i}=1$ for all $1\leq i\leq n$~\cite[page~439]{GM} then by relation~(Pr3) of that presentation, $T_{i,j}=T_{i,j-1}$ in $P_n(M)/\Gamma_2(P_n(M))$ for all $1\leq i<j\leq n$, and it follows by induction on $j-i$ that $T_{i,j}=1$. We conclude from the presentation that $P_n(M)/\Gamma_2(P_n(M))$ is generated by $\brak{a_{j,r} \,\mid\, \text{$1\leq j\leq n$ and $1\leq r\leq g$}}$, subject to the relations $a^{2}_{j,1}a^{2}_{j,2}\cdots a^{2}_{j,g}=1$ for all $1\leq j\leq n$, from which we obtain the given isomorphism.
\end{proof}

\begin{prop}\label{prop:sdp1} 
Let $M=N_{g}$, where $g\geq 1$. In terms of the presentation of $B_{n}(M)$ given by~\cite[Theorem~2.2]{GM}, the map $\map{\psi}{S_n}[B_n(M)/\Gamma_2(P_n(M))]$ defined on the generating set $\{ \tau_{i} \,\vert\, i=1,\ldots, n-1\}$ of $S_{n}$ by $\psi(\tau_{i})=\sigma_i$ for all $1\leq i\leq n-1$ is an injective homomorphism. Consequently, the short exact sequence~\reqref{sestorus1} splits, and $B_n(M)/\Gamma_2(P_n(M)) \cong (\mathbb{Z}_2^n \oplus\mathbb{Z}^{n(g-1)}) \rtimes S_n$
\end{prop}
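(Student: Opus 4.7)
My plan is to mirror the orientable proof of \repr{toruscryst}, with the twist that the kernel of~\reqref{sestorus1} is now the group computed in \relem{torsion3}(\ref{it:torsion3b}) rather than a free Abelian group.

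The first step is to verify that $\psi$ extends to a well-defined homomorphism out of $S_n$. Since $S_n$ has a presentation with generators $\tau_1,\ldots,\tau_{n-1}$ and relations given by the Artin relations~\reqref{artin} together with $\tau_i^2=1$ for all $1\leq i\leq n-1$, it suffices to check that the images $\sigma_1,\ldots,\sigma_{n-1}$ satisfy these relations in $B_n(M)/\Gamma_2(P_n(M))$. The Artin relations already hold in $B_n(M)$ itself by the presentation in~\cite[Theorem~2.2]{GM}, so the only thing to verify is that $\sigma_i^2=1$ in the quotient. The key point is that $\sigma_i^2=T_{i,i+1}$ in $B_n(M)$ (the analogue of \repr{subs}(\ref{subs:3}) in the non-orientable case, which can be read off from the presentation of~\cite[Theorem~2.2]{GM}), and the proof of \relem{torsion3}(\ref{it:torsion3b}) showed that $T_{i,j}=1$ in $P_n(M)/\Gamma_2(P_n(M))$ for all $1\leq i<j\leq n$, i.e.\ $T_{i,j}\in\Gamma_2(P_n(M))$. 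In particular $\sigma_i^2=T_{i,i+1}\in\Gamma_2(P_n(M))$, so $\sigma_i^2=1$ in $B_n(M)/\Gamma_2(P_n(M))$ as required.

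The second step is to verify that $\psi$ is a section for the short exact sequence~\reqref{sestorus1}. This is immediate from the definitions: $\overline{\sigma}(\sigma_i)=\tau_i$ for all $i=1,\ldots,n-1$, so $\overline{\sigma}\circ\psi=\operatorname{\text{id}}_{S_n}$, which also gives injectivity of $\psi$ as a byproduct. Consequently~\reqref{sestorus1} splits, and $B_n(M)/\Gamma_2(P_n(M))$ is isomorphic to a semi-direct product $(P_n(M)/\Gamma_2(P_n(M)))\rtimes S_n$. Substituting the description of the kernel given by \relem{torsion3}(\ref{it:torsion3b}) yields $B_n(M)/\Gamma_2(P_n(M))\cong (\mathbb{Z}_2^n\oplus\mathbb{Z}^{n(g-1)})\rtimes S_n$.

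The main subtlety, such as it is, lies in the first step: one must make sure that the equality $\sigma_i^2=T_{i,i+1}$ and the statement $T_{i,j}\in\Gamma_2(P_n(M))$ carry over from the orientable setting to $M=N_g$. The relation $\sigma_i^2=T_{i,i+1}$ is part of the presentation of $B_n(N_g)$ in~\cite[Theorem~2.2]{GM}, and the inclusion $T_{i,j}\in\Gamma_2(P_n(M))$ is a direct consequence of the computation carried out in the proof of \relem{torsion3}(\ref{it:torsion3b}), which used the relation~(Pr3) of~\cite[Theorem~5.1, Presentation~3]{GM} together with the fact that $T_{i,i}=1$. Once these facts are in place, everything else is essentially formal.
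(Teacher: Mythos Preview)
Your proof is correct and follows the same approach as the paper's: verify the Artin relations and $\sigma_i^2=1$ in the quotient, then observe that $\overline{\sigma}\circ\psi=\operatorname{id}_{S_n}$ gives the splitting. The one point where the paper is slightly more careful is the case $g=1$: the computation in the proof of \relem{torsion3}(\ref{it:torsion3b}) that you invoke for $T_{i,j}\in\Gamma_2(P_n(M))$ is only carried out there for $g\geq 2$ (for $g=1$ the lemma simply cites~\cite[Proposition~8]{GG}), so the paper treats $M=\rp$ separately, using $\sigma_i^2=A_{i,i+1}$ and~\cite{GG} rather than $\sigma_i^2=T_{i,i+1}$ and~\cite{GM}. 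You should either check that the~\cite{GM} presentations genuinely cover $g=1$, or split off that case as the paper does.
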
 

\begin{proof}
Since $\tau_{1},\ldots,\tau_{n-1}$ and $\sigma_{1},\ldots,\sigma_{n-1}$ satisfy the Artin relations, and using~\reqref{sestorus1}, it suffices to show that $\sigma_i^2=1$ in $B_n(M)/\Gamma_2(P_n(M))$ for all $i=1,\ldots,n-1$. We now prove that this is the case. If $g=1$ (resp.\ $g\geq 2$), with the notation of~\cite[Theorem~7]{GG} (resp.~\cite[Theorem~2.2]{GM}), $\sigma_i^2=A_{i,i+1}$ (resp.\ $\sigma_i^2 =T_{i,i+1}$) in $P_{n}(M)$ for all $1\leq i\leq n-1$, so $\sigma_i^2$ belongs to $\Gamma_{2}(P_{n}(M))$ by~\cite[Proposition~8]{GG} (resp.\ by the proof of \relem{torsion3}), and thus its $\Gamma_{2}(P_{n}(M))$-coset is trivial in $B_n(M)/\Gamma_2(P_n(M))$ as required.
\end{proof}

By \relem{torsion3}, if $M=\St$ or $N_{g}$, where $g\geq 1$, $P_n(M)/\Gamma_2(P_n(M))$ has torsion, and we cannot use the methods of the proof of \repr{toruscryst} . But in fact, in these cases, the group $B_n(M)/\Gamma_2(P_n(M))$ is not crystallographic. To see this, we first prove the following lemma.
        
\begin{lem}\label{lem:crystnot}
Let $\Pi$ be a group, and suppose that there exists a group extension of the form:
\begin{equation*}
1\to T\times H \to \Pi \to \Phi \to 1,
\end{equation*}
where $H$ is torsion free, and $T$ is finite and non-trivial. Then $\Pi$ is not a crystallographic group.
\end{lem}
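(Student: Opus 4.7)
The plan is to argue by contradiction using the characterisation of crystallographic groups provided by \relem{cryst}. Assume that $\Pi$ is crystallographic. Then there exists an integer $m\in \N$, a finite group $\Phi'$, and a short exact sequence of the form $0\to \Z^{m}\to \Pi \to \Phi'\to 1$ whose associated integral representation $\map{\Theta}{\Phi'}[\operatorname{\text{Aut}}(\Z^{m})]$ is faithful. My goal will be to produce a non-trivial element of $T$ whose image in $\Phi'$ acts trivially on $\Z^{m}$, contradicting faithfulness.

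The first step is to promote $T$ to a normal subgroup of $\Pi$. Since $H$ is torsion free, the torsion subgroup of $T\times H$ is exactly $T\times \{1\}$, which I identify with $T$. This torsion subgroup is characteristic in $T\times H$, and since $T\times H$ is by hypothesis normal in $\Pi$, it follows that $T$ itself is normal in $\Pi$. Moreover $T\cap \Z^{m}=\{1\}$ because $T$ is finite and $\Z^{m}$ is torsion free.

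The key step is then a commutator argument showing that $T$ centralises $\Z^{m}$. Fix $t\in T$ and $z\in \Z^{m}$, and consider the commutator $[z,t]=ztz^{-1}t^{-1}$. Writing it as $(ztz^{-1})t^{-1}$ and using normality of $T$ in $\Pi$, we have $[z,t]\in T$; writing it as $z(tz^{-1}t^{-1})$ and using normality of $\Z^{m}$ in $\Pi$, we have $[z,t]\in \Z^{m}$. Hence $[z,t]\in T\cap \Z^{m}=\{1\}$, so $t$ commutes with every element of $\Z^{m}$.

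In particular, conjugation by $t$ induces the identity automorphism of $\Z^{m}$, so by the definition of $\Theta$ given in \relem{cryst}, the image of $t$ in $\Phi'$ lies in $\ker{\Theta}$. Faithfulness of $\Theta$ forces $t\in \Z^{m}$, and combined with $T\cap \Z^{m}=\{1\}$ this yields $t=1$. Since this holds for every $t\in T$, we conclude that $T$ is trivial, contradicting the hypothesis that $T$ is non-trivial. There is no real obstacle; the only delicate point is noticing that the two normality hypotheses together force the commutator $[z,t]$ to sit in the intersection $T\cap \Z^{m}$, which then makes faithfulness do all the remaining work.
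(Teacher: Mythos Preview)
Your proof is correct. Both you and the paper first observe that $T$ is the torsion subgroup of $T\times H$, hence characteristic there and therefore normal in $\Pi$. From this point the paper simply invokes the standard fact (citing~\cite[page~34]{Dekimpe}) that a crystallographic group has no non-trivial finite normal subgroup, and is done. You instead prove this fact in situ: the commutator argument showing $[T,\Z^{m}]\subset T\cap\Z^{m}=\{1\}$, followed by faithfulness of $\Theta$ forcing $T\subset\Z^{m}$, is exactly a proof of that cited result specialised to the case at hand. So the two arguments share the same skeleton; yours is more self-contained, the paper's is shorter by appealing to the literature.
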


\begin{proof}
To prove that $\Pi$ is not crystallographic, by~\cite[page~34]{Dekimpe}, it suffices to show that it possesses a non-trivial, normal finite subgroup. Let us show that $T$ is such a subgroup. By the hypotheses, it suffices to prove that $T$ is normal. To see this, 
we view the kernel $T\times H$ as a subgroup of $\Pi$. Any inner automorphism of $\Pi$ thus restricts to an automorphism of $T \times H$. Since the image by such an automorphism of any element of $T$ is also a torsion element, it follows from the fact that $H$ is torsion free that $T$ is indeed normal in $\Pi$.
\end{proof}

\begin{prop}\label{prop:nonorientable}
Let $M=\St$ or $N_{g}$, where $g\geq 1$. Then for all $n\geq 1$, the quotient $B_n(M)/\Gamma_2(P_n(M))$ is not a crystallographic group.
\end{prop}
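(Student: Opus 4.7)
The plan is to apply \relem{crystnot} to the short exact sequence~\reqref{sestorus1}. By \relem{torsion3}, in the principal cases the Abelian group $P_n(M)/\Gamma_2(P_n(M))$ decomposes as a direct sum $T \oplus H$, where $T$ is a non-trivial finite $2$-group and $H$ is a (possibly trivial) free Abelian group. This is exactly the hypothesis of \relem{crystnot} with quotient $\Phi = S_n$, so the desired conclusion that $B_n(M)/\Gamma_2(P_n(M))$ is not crystallographic follows immediately.

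More explicitly, for $M=\St$ with $n\geq 3$, \relem{torsion3}(\ref{it:torsion3a}) allows us to take $T=\mathbb{Z}_2$ (generated by the $\Gamma_2(P_n(M))$-coset of the full twist $\Delta_n^2$) and $H=\mathbb{Z}^{n(n-3)/2}$. For $M=N_g$ with $g\geq 1$ and any $n\geq 1$, \relem{torsion3}(\ref{it:torsion3b}) allows us to take $T=\mathbb{Z}_2^n$ and $H=\mathbb{Z}^{(g-1)n}$; when $g=1$ this simply reduces to $H=\{1\}$, which is vacuously torsion free, so \relem{crystnot} still applies.

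The remaining cases are $M=\St$ with $n=1$ or $n=2$, which fall outside the scope of \relem{torsion3} since $P_n(\St)$ is trivial in both (as $F_2(\St)$ is simply connected). Here $B_n(\St)/\Gamma_2(P_n(\St)) \cong B_n(\St)$, which is trivial for $n=1$ and isomorphic to $\mathbb{Z}_2$ for $n=2$. The case $n=2$ is still handled by \relem{crystnot} applied to the trivial extension $1\to \mathbb{Z}_2\to \mathbb{Z}_2\to 1\to 1$ (with $T=\mathbb{Z}_2$, $H=\{1\}$, $\Phi=\{1\}$), and the case $n=1$ is covered by the convention, implicit in the definition recalled in \resec{cryst}, that a crystallographic group has positive dimension.

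The argument presents no genuine obstacle. In essence, the whole proof amounts to the observation that the torsion subgroup of the finitely generated Abelian group $P_n(M)/\Gamma_2(P_n(M))$ is a characteristic, and therefore normal, non-trivial finite subgroup of $B_n(M)/\Gamma_2(P_n(M))$, which by \relem{crystnot} rules out crystallographicity. The only bookkeeping subtlety is that the small sphere cases $n\leq 2$ must be treated separately, since in those cases the pure braid quotient provides no torsion on its own.
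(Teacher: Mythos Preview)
Your proof is correct and follows the same approach as the paper, which simply cites~\reqref{sestorus1} together with Lemmas~\ref{lem:torsion3} and~\ref{lem:crystnot}. In fact you are more careful than the paper: you explicitly treat the sphere cases $n=1,2$, which lie outside the range of \relem{torsion3}(\ref{it:torsion3a}) and which the paper's one-line proof passes over in silence.
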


\begin{proof}
The result follows from~\reqref{sestorus1} and Lemmas~\ref{lem:torsion3} and~\ref{lem:crystnot}.
\end{proof}

\begin{rem}
If $M=\St$ (resp.\ $M=N_{g}$, where $g\geq 2$), the subgroup $T$ is that generated by the class of the full twist braid (resp.\ by $\left\{a_{j,1}a_{j,2}\cdots a_{j,g} \mid j=1,\ldots,n\right\}$ using the notation  of~\cite[Theorem~5.1]{GM}). If $M=\rp$ then $T=P_n(\rp)/\Gamma_2(P_n(\rp))$.
\end{rem}

\end{document}